\newcommand{\C}{\mathbb{C}}
\newcommand{\RR}{\mathbb{R}}
\newcommand{\GG}{\mathbf{G}}
\newcommand{\lt}{\mathfrak{t}}
\newcommand{\MM}{\mathfrak{M}}
\newcommand{\quot}{/\negmedspace/}
\newcommand{\PP}{\mathbb{P}}
\newcommand{\MRR}{\mathcal{M}}
\newcommand{\Loc}{\mathcal{L}_{\hbar, c}} 
\newcommand{\FT}{T^d \times \mathbb{C}^*} 
\newcommand{\arrangement}{\mathcal{A}} 
\newcommand{\scare}[1]{`#1'}  
\newcommand{\DT}{\mathbb{C}^*} 
\newcommand{\aaa}{\eqpaj} 
\newcommand{\ddd}{\hbar} 
\newcommand{\rmom}{\theta} 
\newcommand{\cmom}{\lambda} 
\newcommand{\normals}{a} 
\newcommand{\moduli}{\overline{\mathcal{M}}}
\newcommand{\roothyp}{K} 
\newcommand{\Pbundle}{\mathfrak{P}}
\newcommand{\mE}{\mathcal{E}} 
\newcommand{\eqpaj}{c_j}
\newtheorem*{rep@theorem}{\rep@title}
\newcommand{\newreptheorem}[2]{%
\newenvironment{rep#1}[1]{%
 \def\rep@title{#2 \ref{##1}}%
 \begin{rep@theorem}}%
 {\end{rep@theorem}}}
\theoremstyle{plain}
\newtheorem{theorem}{Theorem}[section]
\newtheorem{lemma}[theorem]{Lemma}
\newtheorem{proposition}[theorem]{Proposition}
\theoremstyle{remark}
\newtheorem{remark}[theorem]{Remark}
\newtheorem{example}[theorem]{Example}
\newtheorem{definition}[theorem]{Definition}
\begin{document}

\begin{abstract}
We give a complete description of the equivariant quantum cohomology ring  of any smooth hypertoric variety, and find a mirror formula for the quantum differential equation.

\end{abstract}

\title{Quantum Cohomology of Hypertoric Varieties}
\author{Michael B. McBreen and Daniel K. Shenfeld}
\address{Department of Mathematics, Princeton University, Princeton, NJ 08544}
\email{mcbreen@math.princeton.edu, shenfeld@math.princeton.edu}
\maketitle

\section{Introduction}

\noindent
In this paper we describe the quantum cohomology ring of hypertoric varieties. These are hyperk\"ahler analogues of toric varieties, obtained as the hyperk\"ahler quotient of a symplectic complex vector space by the hamiltonian action of a torus; simple examples include $T^*\PP^n$ and crepant resolutions of $A_n$ singularities.

Hypertoric varieties are perhaps the most accessible examples of \emph{symplectic resolutions},  other instances of which include cotangent bundles to flag varieties and Nakajima quiver varieties, such as the Hilbert scheme of points on $\mathbb{C}^2$. Symplectic resolutions are fundamental objects in modern representation theory and mathematical physics; in particular, their quantum cohomology has important links, conjectural and proven, to both. This is the focus of an ongoing project pursued by Bezrukavnikov, Braverman, Etingof, Finkelberg, Toledano Laredo, Losev, Maulik, Okounkov, and others (see e.g. \cite{FRG, BMO, QGQC, NS} and references within).

Recall that the quantum cohomology ring of an algebraic variety $X$ is a commutative, associative deformation of $H^{\bullet}(X, \C)$ over the base $H^2(X,\C)$, defined using virtual counts of rational curves in $X$ called \emph{Gromov-Witten invariants}. For a general symplectic resolution, there are efficient methods to compute the operators of quantum multiplication by a divisor \cite{BMO, QGQC}. However, complete descriptions of the quantum cohomology ring are only known in a handful of cases. The situation is different for hypertoric varieties: our main theorem below gives a presentation by generators and relations of the equivariant quantum cohomology ring.

Most known symplectic resolutions are also obtained as hyperk\"ahler quotients, though by a non-abelian reductive group $G$ rather than a torus; in particular, this is the case for Nakajima quiver varieties. It is an interesting problem to relate their quantum cohomology to that of the hyperk\"ahler quotient of the same space by a maximal torus $T\le G$. We hope to address this problem in a forthcoming paper.

\vspace{4mm}

\noindent
In order to state our results, we now introduce some notation; the complete setting is presented in section \ref{hypertoric_intro}. A hypertoric variety $\MM$ is obtained as the hyperk\"ahler quotient of $T^*\C^n$ by a torus $T^k<T^n$ . It affords an action by the quotient torus $T^d=T^n/T^k~(d=n-k)$, and an additional $\C^*$ action by dilation of the fibers of $T^*\C^n$, scaling the symplectic form by a weight $\hbar$. The coordinate hyperplanes of $\C^n$ descend to $\FT$-equivariant divisors on the quotient, which we denote $u_1,...,u_n$. 

We can associate to $\MM$ an arrangement of $n$ affine hyperplanes $\{H_1,...,H_n\}$ in $\RR^d$, one for each $u_i$ above. Primitive curve classes in $\MM$ are indexed by circuits in the arrangement, namely minimal subsets $S \subset \{1,...,n\}$ such that $\cap_{i \in S} H_i = \emptyset$. Given a circuit $S$, the arrangement determines a splitting $S=S^+\sqcup S^-$. Let $q^{\beta_S}$ be the deformation parameter associated to the curve class $\beta_S$ corresponding to $S$. We prove:
\begin{theorem}
\label{thmquantumrelations}
The $\FT$-equivariant quantum cohomology of a smooth hypertoric variety $\MM$ is generated by $u_1,...,u_n,\hbar$, subject to the relations
\begin{equation}
\label{eqnquantumrelations}
\prod_{i\in S^+}u_{i} \prod_{i\in S^-} (\hbar - u_i) = q^{\beta_S}  \prod_{i\in S^+}(\hbar - u_i) \prod_{i\in S^-} u_i
\end{equation}
for each circuit $S$.
\end{theorem}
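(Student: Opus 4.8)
\section{Proof plan}

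\noindent
The plan is to first settle quantum multiplication by the divisor classes $u_i$, and then to deduce the full presentation by a flatness argument. The input from section~\ref{hypertoric_intro} is the classical equivariant cohomology: $H^\bullet_{\FT}(\MM)$ is the quotient of $\C[u_1,\dots,u_n,\hbar]$ by the ``hyperk\"ahler Stanley--Reisner'' relations $\prod_{i\in S^+}u_i\prod_{i\in S^-}(\hbar-u_i)=0$, one for each circuit $S$, and it is a free module of finite rank $r$ (the number of vertices of $\arrangement$, i.e.\ the number of bases of its matroid) over the subalgebra $H^\bullet_{\FT}(\mathrm{pt})=\C[\hbar]\otimes\C[(\lt^d)^{\dual}]$, the $T^d$-equivariant parameters being realized as explicit linear combinations of the $u_i$. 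Setting $q=0$ in \eqref{eqnquantumrelations} returns exactly the classical circuit relation; this is the first consistency check and already pins down the shape of the $q$-corrections.

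First I would describe the $\FT$-fixed geometry: the fixed points of $\MM$ are indexed by the vertices of $\arrangement$, the invariant curves are explicit chains of $\PP^1$'s joining adjacent vertices, and the Mori cone is generated by the primitive classes $\beta_S$. By the divisor axiom, $\widehat u_i:=u_i\star(-)$ equals $u_i\cup(-)$ plus a sum $\sum_{\beta\neq 0}(u_i\cdot\beta)\,q^{\beta}\,P_\beta$, where each $P_\beta$ is assembled from two-point genus-zero $\FT$-equivariant Gromov--Witten invariants in class $\beta$. The heart of the proof is to evaluate this sum: one must show that the net contribution organizes into a product, over the circuits $S$ with $i\in S$, of geometric series in $q^{\beta_S}$, and that the poles of these series are exactly the factors $(\hbar-u_j)$ appearing in \eqref{eqnquantumrelations}. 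I would carry this out by $\FT$-localization on $\moduli_{0,2}(\MM,m\beta_S)$: the fixed stable maps factor through the coordinate sub-hypertoric variety supported on the circuit $S$ --- a resolution of an $A_{|S|-1}$-type slice, modelled on $T^*\PP^{|S|-1}$ --- which reduces the computation to that model case, where the quantum product is known; and the virtual/obstruction contribution is the ``other half'' of the hyperk\"ahler structure, whose $\FT$-equivariant Euler class supplies the shifted weights $\hbar-u_j$ in place of the naive $u_j$. (Alternatively, one may identify the $\widehat u_i$ with the divisor operators for symplectic resolutions of \cite{BMO,QGQC}, or establish the mirror formula for the quantum differential equation first and read \eqref{eqnquantumrelations} off the resulting $D$-module.)

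Granting the $\widehat u_i$, a direct algebraic manipulation verifies that $\prod_{i\in S^+}\widehat u_i\prod_{i\in S^-}(\hbar-\widehat u_i)=q^{\beta_S}\prod_{i\in S^+}(\hbar-\widehat u_i)\prod_{i\in S^-}\widehat u_i$ holds as an identity of commuting operators, i.e.\ \eqref{eqnquantumrelations} holds in $QH^\bullet_{\FT}(\MM)$. Hence there is a $\C[[q]]$-algebra map $\psi\colon R\to QH^\bullet_{\FT}(\MM)$ from $R:=\C[u_1,\dots,u_n,\hbar][[q]]$ modulo \eqref{eqnquantumrelations}, sending each $u_i,\hbar,q$ to itself. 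It is surjective because $u_1,\dots,u_n,\hbar$ generate $H^\bullet_{\FT}(\MM)$ as a $\C$-algebra, hence generate $QH^\bullet_{\FT}(\MM)$ as a $\C[[q]]$-algebra. It is injective because, reducing modulo $q$, $\psi$ specializes to the classical presentation isomorphism $R/qR\xrightarrow{\sim}H^\bullet_{\FT}(\MM)=QH^\bullet_{\FT}(\MM)/q\,QH^\bullet_{\FT}(\MM)$, while $QH^\bullet_{\FT}(\MM)$ is flat over $\C[[q]]$; so $\ker\psi$ satisfies $\ker\psi=q\,\ker\psi$, and since $R$ is Noetherian and $q$-adically complete, hence separated, $\ker\psi\subseteq\bigcap_n q^nR=0$.

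The main obstacle is the Gromov--Witten computation of the second paragraph. Two points require care: first, ruling out the contributions of imprimitive or ``mixed'' curve classes and of the boundary strata of $\moduli_{0,2}(\MM,m\beta_S)$, so that each $q^{\beta_S}$ genuinely contracts into a single geometric series; and second, handling the virtual class and obstruction bundle correctly, since it is exactly their $\FT$-equivariant Euler classes that convert the naive weights $u_j$ into the $\hbar$-shifted weights $\hbar-u_j$ --- pinning down these shifts, together with the signs and the matching of $S^+$ against $S^-$, is the delicate part, and the reduction to the $T^*\PP^{|S|-1}$ model is the tool that makes it tractable.
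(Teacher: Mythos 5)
Your overall architecture matches the paper's: establish quantum multiplication by divisors via the symplectic-resolution machinery, verify the circuit relations as identities in the quantum ring, and deduce completeness from the classical presentation at $q=0$. Your flatness argument for completeness is equivalent in content to the paper's dimension count over $\C[\hbar]$; both are Nakayama over $q$.

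The genuine gap is the claim that ``a direct algebraic manipulation verifies'' the relation once the $\hat u_i$ are known. Having the operator
$\hat u_i = u_i \cup (-) + \hbar\sum_S \frac{q^S}{1-q^S}(u_i,\beta_S) L_S(-)$
does not, by itself, let you expand the iterated quantum product on either side of (\ref{eqnquantumrelations}): you must compute how the Steinberg correspondences $L_S$ interact with cup products by the various $u_j$, and this is geometry, not algebra. The paper spends most of Section \ref{secquantumrelations} on exactly this. Lemma \ref{vanishing} uses the moment map $\mu_d$ and a dimension count on its fibers to show that $L_S$ annihilates the relevant partial products, so that only the last factor on each side of (\ref{eqnquantumrelations}) picks up a quantum correction. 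Lemmas \ref{steinbergpoly} and \ref{steinbergpoly2} then compute $\hbar\, L_S\bigl(\prod_{i\in S,\, i\neq i_0} v_i\bigr)$ explicitly, by extending over the one-parameter subregular family $\rootfam_S$, identifying the result with $(-1)^{|S|}[\Pbundle^S]$ on a generic fiber, and specializing back to $\lambda=0$. Your plan assumes the conclusions of these lemmas without supplying them; to make ``algebra after $\hat u_i$'' honest you would need a far more explicit form of $\hat u_i$ (a fixed-point matrix, say), and producing that is precisely the content you have deferred.

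A secondary difference: you propose localization on $\moduli_{0,2}(\MM,m\beta_S)$ to derive $\hat u_i$, while the paper goes through the BMO deformation formalism and the $T^*\PP^{|S|-1}$ computation of \cite{QGQC}. You list the BMO route as an alternative, so this is not a gap; but note that the deformation argument simultaneously furnishes Proposition \ref{codim1structure} (curves in the subregular fiber lie in a $\PP^{|S|-1}$-bundle over an affine base), which is re-used in the proofs of Lemmas \ref{steinbergpoly}--\ref{steinbergpoly2}. The two steps of the paper are more tightly coupled than your plan makes them appear.
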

\noindent Setting $q^{\beta_S}=0$ one recovers the relations in classical equivariant cohomology, which were described in \cite{HP04}, although our proof relies on their result.

In line with the general philosophy of \cite{BMO}, we start by studying deformations of $\MM$ obtained by varying the level of the moment map.
One can find such a deformation where all effective curve classes are multiples of $\beta_S$, and we show that this curve is contained in a projective bundle over an affine base, where all necessary computations are straightforward. Finally, we deduce the relations (\ref{eqnquantumrelations}) by specializing to the central fiber.

\vspace{4mm}

\noindent
Quantum cohomology depends on a parameter in $H^2(\MM, \C)$, and in fact one can use it to define a connection on the trivial bundle over $(T^k)^{\vee} = H^2(\MM,\C)/H^2(\MM,\mathbb{Z})$ with fiber $H_{\FT}^{\bullet}(\MM,\C)$, called the {\em quantum connection}. In section \ref{secmirror} we construct a \scare{mirror family} over $(T^k)^{\vee}$ of complex manifolds $\MRR_q$ equipped with a local system $\Loc$, and prove the following mirror formula:

\begin{theorem}
\label{thmmirror}
  For generic equivariant parameters, the Gauss-Manin connection on $H^d(\MRR_q,\Loc)$ over $(T^k)^{\vee}$ can be identified with the quantum connection on $H^{\bullet}_{\FT}(\MM,\C)$ over the same base.
\end{theorem}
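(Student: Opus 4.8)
\medskip
\noindent\emph{Strategy of the proof.} For generic equivariant parameters both the Gauss--Manin connection and the quantum connection are flat connections on a trivial bundle over $(T^k)^{\vee}$ of the same rank: by Theorem \ref{thmquantumrelations} together with the classical computation of \cite{HP04} the quantum side has rank $\chi(\MM)$, the number of vertices of the arrangement $\arrangement$, and the mirror side will be seen to have the same rank. An identification therefore amounts to matching the operators $\nabla_{q_a\partial_{q_a}}$ as $q_a$ runs over a basis of $H^2(\MM,\C)$, so the plan is to make both families of operators completely explicit and to compare them circuit by circuit.

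On the quantum side most of the work is already done. Quantum multiplication by the divisor $u_i$ is simply multiplication by $u_i$ in the ring presented in Theorem \ref{thmquantumrelations}, hence is determined by the circuit relations (\ref{eqnquantumrelations}); the divisor equation then yields $\nabla_{q_a\partial_{q_a}}=q_a\partial_{q_a}+(p_a\star-)$, where $p_a$ is the combination of the $u_i$ dual to the $a$-th primitive curve class. Equivalently, the flat sections are governed by the holonomic system of differential equations obtained from each circuit relation (\ref{eqnquantumrelations}) with every quantum multiplication $u_i\star$ expressed, via the divisor equation, as a differential operator in the $q_a\partial_{q_a}$; this is the A-model counterpart of the hypergeometric system that will appear on the B-side.

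On the mirror side, the construction of section \ref{secmirror} realizes $\MRR_q$ as the complement in a torus of $n$ hypersurfaces with equations $\ell_i$ whose positions depend on $q$ through translations $r_i(q)$, and $\Loc$ as a rank-one local system whose monodromy around the $i$-th hypersurface is built from $\hbar$ and the equivariant parameters. The twisted cohomology $H^d(\MRR_q,\Loc)$ is computed by the Aomoto complex of logarithmic forms, with differential wedging by $\eta_q=\sum_i w_i\,d\log\ell_i$, the $w_i$ being the exponents prescribed by $\Loc$. I would first show that, for generic equivariant parameters, this complex is acyclic outside degree $d$ and that $H^d$ has dimension $\chi(\MM)$ --- using the standard vanishing theorems for arrangement complements with generic weights, combined with the combinatorics of $\arrangement$ --- and produce a $\beta$nbc-type basis of top logarithmic forms indexed by the vertices of $\arrangement$, matching the fixed-point basis of $H^{\bullet}_{\FT}(\MM)$. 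Differentiating a period $\int_{\Gamma_q}\big(\prod_i\ell_i^{w_i}\big)\psi$ of a class $[\psi]$ shows that $\nabla^{\mathrm{GM}}_{q_a\partial_{q_a}}$ is, up to the $q$-dependence of the chosen forms, multiplication of $\psi$ by the rational function $-\sum_i w_i\,(q_a\partial_{q_a}r_i)/\ell_i$; expressing this in the chosen basis is a finite linear-algebra problem, solved by the affine dependencies among the $\ell_i$. For a circuit $S$ the essentially unique such dependence is supported on $S$ with sign pattern $S=S^+\sqcup S^-$, and working it out reproduces exactly the relation (\ref{eqnquantumrelations}), with the monomial $q^{\beta_S}$ arising as the product of the $q$-dependent shifts $r_i(q)$, $i\in S$.

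The crux is this last computation, and showing that it matches quantum multiplication on the nose rather than merely up to gauge. Two points demand care. First, the mirror map must be shown to be trivial, i.e.\ the coordinate $q$ on the two copies of $(T^k)^{\vee}$ must coincide without reparametrization; I expect to do this exactly as Theorem \ref{thmquantumrelations} is proved, by specializing the level of the moment map so that only one circuit $S$ contributes: then $\MM$ degenerates to a projective bundle over an affine base and its mirror to an explicit family whose periods are classical Euler integrals, from which the Gauss--Manin connection and the mirror map are read off directly. Second, one must pin down the normalizations of $\Loc$ and of the period pairing so that $H^d(\MRR_q,\Loc)$ is identified with $H^{\bullet}_{\FT}(\MM,\C)$ compatibly with the grading by $\hbar$ and the equivariant parameters; the genericity hypothesis enters here, both to guarantee the cohomological concentration above and to ensure that the two vertex bases differ only by an invertible diagonal rescaling. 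Granting these points, matching the two families of operators completes the proof; as a consistency check one also expects the Poincar\'e pairing on $H^{\bullet}_{\FT}(\MM)$ to correspond to the intersection pairing between $H^d(\MRR_q,\Loc)$ and $H^d(\MRR_q,\Loc^{\vee})$.
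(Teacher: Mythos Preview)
Your outline is broadly in the right spirit, but it takes a considerably more laborious route than the paper, and one step rests on tools that do not apply as stated.

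The paper's argument is the following. It writes down an explicit Euler-type top form
\[
\Omega \;=\; \prod_{i\in\arrangement}(1+q_it^{a_i})^{\hbar}\,\prod_{j=1}^d t_j^{-c_j}\,\frac{dt_j}{t_j}
\]
on $\MRR_q$ and studies the periods $J_\gamma(q)=\int_\gamma\Omega$. A short direct computation (one integration by parts for the linear relations, and an elementary identity for each circuit) shows that these periods satisfy \emph{exactly} the differential operators obtained from the quantum relations~(\ref{eqnquantumrelations}) by replacing $u_i\ast$ with $q_i\partial_{q_i}$; in other words, the QDE of $\MM$ is a GKZ hypergeometric system and $\Omega$ is its standard Euler integrand. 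The paper then invokes the non-resonance criterion of \cite{gelfand1990generalized}: for generic $(\hbar,c_j)$ the Euler integrals span the full solution space of this holonomic system. The identification of the two connections then follows immediately by sending $P(\nabla_i,q)[\MM]\mapsto P(\mE_i,q)\Omega$. There is no need to produce bases on either side, no mirror-map computation, and no normalization issue: since both sides are governed by the \emph{same} differential system in the \emph{same} variable $q$, the mirror map is manifestly the identity and the gauge ambiguity is fixed by matching the cyclic vectors $[\MM]$ and $\Omega$.

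Compared with this, your plan introduces two complications. First, you propose to compute $H^d(\MRR_q,\Loc)$ via the Aomoto complex and a $\beta$nbc basis; but $\MRR_q$ is the complement of translated \emph{subtori} in $(T^d)^\vee$, not of affine hyperplanes, so the standard Orlik--Solomon/Aomoto theory does not apply literally and would have to be redeveloped for toric arrangements. The paper sidesteps this entirely by working only with the single form $\Omega$ and letting GKZ non-resonance do the rank count. Second, your plan to verify triviality of the mirror map by specializing to a single-circuit geometry is unnecessary: once both connections are shown to be the \emph{same} GKZ system in the variables $q_i$, there is nothing to check. Your approach could presumably be pushed through, but the paper's route via the explicit Euler form and the non-resonance theorem is both shorter and avoids the toric-arrangement subtlety.
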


The novel feature of our mirror formula is its extension to the non-symplectic action of the torus $\DT$, without which the Gromov-Witten invariants of $\MM$ would be trivial due to its hyperk\"ahler structure. This $\DT$ action is shared by other hyperk\"ahler spaces like quiver varieties and Hitchin systems, for which a similar situation may hold.

\vspace{4mm}

\noindent
This paper is organized as follows. We start by reviewing the necessary background on quantum cohomology of symplectic resolutions and on hypertoric varieties in sections \ref{BMO_review} and \ref{hypertoric_intro}. In section \ref{secdivisormult}, following \cite{BMO}, we derive a formula for quantum multiplication by a divisor in terms of classical multiplication and the action of certain Steinberg correspondences. The main theorem is proved in section \ref{secquantumrelations}. Finally, in section \ref{secmirror} we prove our mirror formula.

\subsection*{Acknowledgements}
It is our pleasure to thank our doctoral advisor Prof. Andrei Okounkov for his endless help, patience and invaluable guidance. The authors also thank Sachin Gautam, Johan de Jong, Davesh Maulik, Nick Proudfoot, Michael Thaddeus and Chris Woodward for many stimulating conversations.

Michael McBreen was supported by an NSERC Postgraduate Scholarship during the preparation of this paper.
The results presented in this paper are part of the authors' Ph.D. theses.

\pagebreak

\section{Quantum cohomology of symplectic resolutions}
\label{BMO_review}

\subsection{Symplectic resolutions}

\subsubsection{Basic setting}
\label{sec_basic_setting}
We briefly review some of the results presented in \cite{BMO} on quantum cohomology of symplectic resolutions. Recall that a symplectic resolution $X$ is a holomorphic symplectic variety, such that the canonical map
\[
\pi: X\to X_0=Spec(H^0(X,\mathcal{O}_X))
\]
is projective and birational. We assume that $X$ admits an action by a group $\GG=G\times \DT$, where $G$ is reductive, satisfying the following conditions:
\begin{enumerate}
  \item The $G$ action is Hamiltonian;
  \item The $\DT$ action scales the symplectic form by a nontrivial character $\hbar$;
  \item The fixed point locus $X^g$ is proper for some $g\in\GG$.
\end{enumerate}
Typical examples include cotangent bundles to homogeneous spaces and Nakajima quiver varieties. Smooth hypertoric varieties are also symplectic resolutions.

\subsubsection{Deformation}
\label{secdeformationsympres}
The deformations of $(X,\omega)$ are classified by the period map, namely the image of the symplectic form $\omega$ in $H^2(X,\C)$:

\begin{align}
\label{deformation_diagram}
\xymatrix{
&X \ar@{^{(}->}[r] \ar[d]& \tilde{X} \ar[d]^{\phi} \\
&[\omega] \ar[r]         & H^2(X,\C)              &}
\end{align}
The fibers of $\phi$ are symplectic resolutions, and the generic fiber is affine. We call primitive effective curve classes $\beta \in H_2(X,\mathbb{Z})$ {\em primitive coroots}. The pairing $\omega(\beta)$ vanishes along a hyperplane $K_{\beta} \subset H^2(X,\C)$ called a {\em root hyperplane}; $\beta$ is only effective along $K_{\beta}$. We call the union $\Delta$ of all $K_{\beta}$ the {\em discriminant locus}. Note that the fiber $X_{\lambda}$ above a generic point $\lambda \in K_{\beta}$ contains a unique primitive effective curve class $\beta$.

\subsubsection{Steinberg correspondences}
Define the \emph{Steinberg variety} as the fiber product
\[ Z = X \times_{X_0} X. \]
By a result of Kaledin \cite{KA}, the irreducible components of $Z$ are lagrangian subvarieties of $X\times X$, called \emph{Steinberg correspondences}. The class $L$ of such a correspondence defines an endomorphism of $H^{\bullet}_{\GG}(X,\C)$ by
\[
L(\gamma) = p_{1*}(L \cap p_2^*(\gamma)).
\]
where $p_1, p_2$ are the projections of $X \times X$ onto the factors.

\subsection{Quantum cohomology}

\subsubsection{Equivariant Gromov-Witten invariants}
\label{secdefQH}
Let $\moduli_{0,n}(X,\beta)$ be the moduli space of maps from stable genus $0$, $n$-pointed curves to $X$ whose image in $H_2(X,\mathbb{Z})$ is $\beta$. {\em Equivariant Gromov-Witten invariants} associate to equivariant classes $\gamma_1,...,\gamma_n$ an element of the field of fractions of $H^{\bullet}_{\GG}(pt)$ by:
\begin{equation}
\langle\gamma_1,...,\gamma_n\rangle_{0,n,\beta}^X = \int_{[\moduli_{0,n}(X,\beta)]^{vir}}\prod_{k=1}^n\mathrm{ev}_k^*\gamma_k,
\end{equation}
where $\mathrm{ev}_k$ is the evaluation map of the $k$-th marked point, and $[\moduli_{0,n}(X,\beta)]^{vir}$ is the {\em virtual fundamental class}, which has complex dimension $\dim X + n - 3$ since the canonical bundle of $X$ is trivial. 

\begin{definition}
The {\em equivariant quantum cohomology ring} of $X$ is the associative, commutative deformation of $H^{\bullet}_{\GG}(X)$ over the base $H^2(X,\C)$, defined by
\begin{equation}
\langle\gamma_1\ast\gamma_2,\gamma_3\rangle = \sum_{\beta>0}\langle\gamma_1,\gamma_2,\gamma_3\rangle_{0,3,\beta}^X \cdot q^{\beta},
\end{equation}
where $\langle\cdot,\cdot\rangle$ is the Poincar\'e pairing, and the sum is taken over all effective curve classes. The formal variable $q$ can be thought of as a coordinate on $H^2(X,\C)$, so that $q^{\beta}$ becomes $e^{2\pi i\omega(\beta)}$.
\end{definition}

\subsubsection{Quantum product from the deformation}
\label{quantprodfromdef}

For a divisor $D$ and $\beta\ne0$, we have
\[
\langle\gamma_1,D,\gamma_2\rangle_{0,3,\beta}^X = (D, \beta)\langle\gamma_1,\gamma_2\rangle_{0,2,\beta}^X
\]
by the divisor equation. If cohomology is generated by divisors, as is the case for hypertoric spaces, the quantum cohomology is thus determined by the two-point invariants.

One can rewrite them as follows. We have
\[
(\mathrm{ev}_1\times \mathrm{ev}_2)_* [\moduli_{0,2}(X, \beta)]^{vir} = \hbar L_{\beta}; \ \ L_{\beta} \in H_{2\text{dim}X}^{BM, \GG}(X \times X, \C).
\]
Each curve lies in a fiber of the affinization map, hence $L_{\beta}$ lies in the Steinberg variety. It follows from degree considerations that $L_{\beta}$ is a sum of fundamental classes of components. We have
\[
\langle\gamma_1,\gamma_2\rangle_{0,2,\beta}^X =  \hbar \langle L_{\beta}(\gamma_1),\gamma_2 \rangle.
\]
The following is shown in \cite{BMO}:
\begin{proposition}
Only multiples of coroots contribute to quantum multiplication by a divisor. In other words, with notation as above, the operator of quantum multiplication by a divisor $u\in H^2_{\GG}(X)$ is

\begin{equation}
\label{BMO_equation}
u\ast\text{---} = u\cup\text{---} + \hbar \sum_{\beta, m \geq 1} (u,m\beta) q^{m\beta} L_{m\beta}(\text{---}).
\end{equation}
where $\beta$ ranges over the coroots of the resolution.
\end{proposition}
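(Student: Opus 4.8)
The plan is to combine the identities already displayed with the deformation of $X$ over $H^2(X,\C)$, as in \cite{BMO}. Substituting the divisor equation and the rewriting $\langle\gamma_1,\gamma_2\rangle^X_{0,2,\beta}=\hbar\langle L_\beta(\gamma_1),\gamma_2\rangle$ into the definition of the quantum product, and using that the Poincar\'e pairing on $H^\bullet_\GG(X)$ becomes non-degenerate over the fraction field of $H^\bullet_\GG(pt)$, one immediately gets
\[
u\ast\gamma=u\cup\gamma+\hbar\sum_{\beta>0}(u,\beta)\,q^{\beta}\,L_\beta(\gamma),
\]
the sum running over \emph{all} effective curve classes. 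Hence the real content is the vanishing $L_\beta=0$ whenever $\beta$ is not a positive integer multiple of a single primitive coroot; since $m\beta=m'\beta'$ for primitive coroots forces $\beta=\beta'$, this vanishing is exactly what turns the sum above into the stated one.

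To prove the vanishing I would spread the two-point construction over the family $\phi\colon\tilde X\to H^2(X,\C)$ of (\ref{deformation_diagram}). The $\GG$-action lifts to $\tilde X$, and — this is the input imported from \cite{BMO} — the relative moduli spaces $\moduli_{0,2}(\tilde X/H^2(X,\C),\beta)$ carry equivariant relative virtual classes restricting fiberwise to $[\moduli_{0,2}(X_\lambda,\beta)]^{vir}$. Applying $(\mathrm{ev}_1\times\mathrm{ev}_2)_*$ yields a class $\hbar\tilde L_\beta$ on the relative Steinberg variety $\tilde Z=\tilde X\times_{\tilde X_0}\tilde X$ (the relative affinization $\tilde X\to\tilde X_0$), with $\tilde L_\beta|_0=L_\beta$ and $\tilde L_\beta|_\lambda=L_\beta^{(\lambda)}$; since every curve of class $\beta$ is contracted by $\tilde X\to\tilde X_0$, this class is supported over the discriminant $\Delta$. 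By the same degree count that forces $L_\beta$ to be a sum of fundamental classes of components, $\tilde L_\beta$ is a combination $\sum_i c_i[V_i]$ of the components $V_i$ of $\tilde Z$ dominating the individual root hyperplanes $K_{\beta_i}$, since no component lying over a deeper intersection of walls has the right dimension. To find $c_i$, restrict to a generic $\lambda\in K_{\beta_i}$: there $\beta_i$ is the only primitive effective curve class of $X_\lambda$, so if $\beta$ is not a multiple of $\beta_i$ then $\moduli_{0,2}(X_\lambda,\beta)=\emptyset$ and $L_\beta^{(\lambda)}=0$; but $L_\beta^{(\lambda)}=\tilde L_\beta|_\lambda=c_i\,[V_i|_\lambda]$ with $[V_i|_\lambda]\neq 0$, so $c_i=0$. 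If $\beta$ is a multiple of no primitive coroot this holds for every $i$, so $\tilde L_\beta=0$ and $L_\beta=0$. (When $\beta=m\beta_0$, the same restriction to a generic point of $K_{\beta_0}$ instead reduces the computation of $L_{m\beta_0}$ to a resolution with a single coroot, which is the route taken in section \ref{secquantumrelations}.)

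The hard part is the relative Gromov--Witten input just invoked: one must realize $\phi$ $\GG$-equivariantly, equip $\moduli_{0,2}(\tilde X/H^2(X,\C),\beta)$ with a relative perfect obstruction theory whose fibers are the absolute ones, and — the genuinely delicate point, since $X$ is noncompact — ensure that the $\GG$-fixed loci of these moduli spaces stay proper over the base, so that the equivariant pushforwards exist and vary as claimed. I would quote these foundations from \cite{BMO} rather than reprove them. The remaining ingredients — extracting the displayed formula from the given identities, and the dimension bookkeeping that pins $\tilde L_\beta$ to the wall components $V_i$ — are then routine.
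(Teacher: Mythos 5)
The paper does not actually prove this proposition: it is stated as a result imported from \cite{BMO}, and the nearby Proposition \ref{specialization} (``implicit in \cite{BMO}'') is the specialization mechanism that makes the argument work. So there is no in-paper proof to compare against line by line; the question is whether your sketch is a faithful reconstruction of the \cite{BMO} argument, and in essence it is. You correctly reduce the statement to the vanishing $L_\beta = 0$ for $\beta$ not a multiple of a single primitive coroot, and the mechanism you use --- spread over the deformation $\tilde X \to H^2(X,\C)$, observe the two-point class is confined to the wall where $\omega(\beta)=0$, and restrict to a generic point of that wall where there is a unique primitive effective class --- is the one the paper itself deploys later (in Lemma \ref{steinbergpoly} and its neighbours), via Proposition \ref{specialization}.

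One point in your dimension bookkeeping needs adjustment, though it does not change the conclusion. You set up the relative moduli problem over the full base $B = H^2(X,\C)$ and claim the resulting class $\tilde L_\beta$ is, by degree count, a sum of fundamental classes of components of $\tilde Z$ dominating codimension-one walls. But the relative virtual dimension over $B$ is $\dim X - 1 + \dim B$ (giving $\tilde L_\beta$ dimension $\dim X + \dim B$ after dividing by $\hbar$), which exceeds by one the dimension $\dim X + \dim B - 1$ of any component of $\tilde Z$ lying over a wall; only the diagonal has the ambient dimension, and $\tilde L_\beta$ cannot be supported there. The fix, which the paper itself uses, is to work over $K_\beta$ (or a generic one-dimensional slice $V \subset K_\beta$): there the relative virtual dimension drops by one, the wall components have exactly the right dimension, and your restriction-to-the-generic-fiber argument then shows $\tilde L_\beta = 0$ unless $\beta$ is a multiple of the coroot attached to $K_\beta$. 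With that correction, your argument is a sound reconstruction of the \cite{BMO} proof; the foundational inputs you flag (equivariant relative virtual classes for noncompact targets, properness of fixed loci over the base) are indeed the genuinely delicate points and are appropriately deferred to \cite{BMO}.
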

In fact, it is enough to understand the two point invariants of a generic fiber $X_{\lambda}$ for $\lambda \in K_{\beta}$, as in (\ref{deformation_diagram}). Denote the cycle $(\mathrm{ev}_1\times \mathrm{ev}_2)_* [\moduli_{0,2}(X_{\cmom}, \beta)]^{vir}$ in $X_{\cmom} \times X_{\cmom}$ by $L_{\cmom}$. As a non-equivariant cycle, it corresponds to a unique linear combination of fundamental classes of Steinberg correspondences on the central fiber, which we write $Spec(L_{\cmom})$. The following is implicit in \cite{BMO}:
\begin{proposition}
\label{specialization}
\[ L_{\beta} = Spec(L_{\cmom})\]
where we choose the natural lift of fundamental classes to $\FT$ equivariant correspondences.
\end{proposition}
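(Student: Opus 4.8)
The plan is to realize $L_\beta$ and $L_\cmom$ as the two fibre restrictions of a single relative Gromov--Witten class living over a curve inside the root hyperplane $K_\beta$, and then to identify the ensuing specialization map with the operation $Spec$.

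First I would set up the family. Every primitive coroot is effective on $X=\phi^{-1}([\omega])$, so by the convention that $\beta$ is effective only along $K_\beta$ we get $[\omega]\in K_\beta$; in fact $[\omega]$ lies on every root hyperplane. Pick a smooth affine curve (or analytic disc) $C\subset K_\beta$ through $[\omega]$ whose generic point $\eta$ is a generic point of $K_\beta$; since all root hyperplanes pass through $[\omega]$, a general such $C$ meets the others only at $[\omega]$. Pulling back the diagram (\ref{deformation_diagram}) gives a family $p\colon\mathcal X_C\to C$ of symplectic resolutions with special fibre $X$ and generic fibre $X_\cmom$, along which $\beta$ stays effective and, over $\eta$, is the only effective class up to positive multiples. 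Let $\mathcal X_{0,C}\to C$ be the relative affinization and $\mathcal Z_C=\mathcal X_C\times_{\mathcal X_{0,C}}\mathcal X_C$ the relative Steinberg variety; since $H^2(X,\C)$ and hence $K_\beta$ is stable under the $\DT$-action, $C$ can be chosen $\DT$-stable, so the whole picture is $\GG$-equivariant.

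Next I would repeat the argument of \cite{BMO} relative to $C$. The relative moduli space $\moduli_{0,2}(\mathcal X_C/C,\beta)$ carries a relative virtual class; every stable map of class $\beta$ is contracted by $\mathcal X_C\to\mathcal X_{0,C}$, so $\mathrm{ev}_1\times\mathrm{ev}_2$ maps it into $\mathcal Z_C$ and the pushforward of the virtual class equals $\hbar$ times a $\mathbb Z$-linear combination $\mathcal L$ of fundamental classes of irreducible components of $\mathcal Z_C$, by the same degree count as in \cite{BMO}. These components are flat over $C$: over each $t\in C$ a component restricts to a Steinberg correspondence of $X_t$, recovering over $\eta$ those of $X_\cmom$ and over $[\omega]$ those of $X$ --- this matching of components is precisely the correspondence underlying $Spec$. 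Because $C$ is a smooth curve, restriction to a fibre is the Gysin map along a Cartier divisor, which commutes with virtual classes, so $\hbar\mathcal L$ restricts over $t$ to $(\mathrm{ev}_1\times\mathrm{ev}_2)_*[\moduli_{0,2}(X_t,\beta)]^{vir}$; in particular $\mathcal L|_{[\omega]}=L_\beta$, while $\mathcal L|_\eta$ is the Steinberg-basis expansion $\hbar^{-1}L_\cmom$. Now the specialization homomorphism in Borel--Moore homology sends the fundamental class of a component of $\mathcal Z_C|_\eta$ to that of its fibre over $[\omega]$, so it agrees with $Spec$ once the common factor $\hbar$ cancels; applying it to the globally defined class $\mathcal L$ yields $Spec(L_\cmom)=sp(\mathcal L|_\eta)=\mathcal L|_{[\omega]}=L_\beta$. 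Equivariance poses no extra problem: every Steinberg correspondence is $\GG$-invariant, hence carries a canonical equivariant fundamental class, and $sp$ respects these natural lifts.

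The main obstacle is the relative Gromov--Witten set-up. The moduli space $\moduli_{0,2}(\mathcal X_C/C,\beta)$ need not be proper over $C$, so one has to verify that the properness exploited in \cite{BMO} --- contraction by the affinization together with condition (3) of Section~\ref{sec_basic_setting} --- survives in families, making $\mathrm{ev}_1\times\mathrm{ev}_2\colon\moduli_{0,2}(\mathcal X_C/C,\beta)\to\mathcal Z_C$ proper and its Borel--Moore pushforward compatible with restriction to fibres. One must also check that $\mathcal Z_C$ is indeed flat over $C$ with the stated fibrewise components. For hypertoric varieties both points become elementary once one uses the explicit description of $\mathcal X_C$ as a projective bundle over an affine base, the same description used in Section~\ref{secquantumrelations}.
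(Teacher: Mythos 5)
The paper offers no proof of this proposition — it is dismissed with ``the following is implicit in \cite{BMO}'' — so there is no argument of the authors' to compare against. What you have written is a plausible reconstruction of the argument one would extract from \cite{BMO}: base-change the universal deformation to a generic curve $C$ through the origin inside $K_\beta$, interpret both $L_\beta$ and $L_\cmom$ as fibre restrictions of a single relative Gromov--Witten pushforward class $\hbar\mathcal L$ supported on the relative Steinberg variety $\mathcal Z_C$, and identify the Borel--Moore specialization map with the operation $Spec$. This matches the intent of the cited sources and of the later computation in Section~\ref{secquantumrelations}, where $L_S$ is obtained ``by specialization'' after being computed on the generic fibre.

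Two points deserve tightening. First, your parenthetical ``over each $t \in C$ a component restricts to a Steinberg correspondence of $X_t$'' is too strong: a component of $\mathcal Z_C$ that is irreducible over $\eta$ will typically degenerate over $[\omega]$ to a \emph{union} of Steinberg correspondences with multiplicities, since the central fibre has more compact subvarieties. This does not break the argument — the specialization homomorphism carries the fundamental class of the generic component to the fundamental class of its flat limit cycle, which is exactly the linear combination entering $Spec(L_\cmom)$ — but the phrasing should reflect that flatness is over $C$ as a cycle, not component-by-component. Second, the $\hbar$ bookkeeping is slightly off relative to the paper's conventions: the paper defines $L_\beta$ as $\hbar^{-1}$ times the pushforward of the virtual class on the central fibre, but defines $L_\cmom$ as the \emph{full} pushforward on $X_\cmom \times X_\cmom$; the factor of $\hbar$ is then absorbed into the definition of $Spec$, which reads off the coefficients of the Steinberg basis rather than specializing the raw class. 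Your chain $Spec(L_\cmom) = sp(\mathcal L|_\eta) = \mathcal L|_{[\omega]} = L_\beta$ should state this normalization explicitly rather than writing $\mathcal L|_\eta = \hbar^{-1} L_\cmom$ and hoping the $\hbar$ ``cancels.'' The remaining gaps you flag yourself — relative properness of $\moduli_{0,2}(\mathcal X_C/C,\beta)$ over $\mathcal Z_C$ and commutation of the virtual-class pushforward with the Gysin restriction to a fibre — are genuine but standard: properness follows fibrewise from the contraction to the relative affinization together with condition (3) of Section~\ref{sec_basic_setting}, and both are transparent in the hypertoric case once $\Pbundle^S$ is constructed as a projective bundle over an affine base.
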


%
%
\section{Hypertoric Varieties}
\label{hypertoric_intro}

\noindent
We review the definition and properties of hypertoric spaces which we'll need in the body of the paper. The reader may find a fuller treatment in e.g. \cite{Proudfoot2006survey} and the references within.

\subsection{Definitions}
Consider the torus $T^n=(\C^*)^n$ acting symplectically on $T^*\C^n$. Setting $\lt^n = \text{Lie}(T^n)$, the moment map $\mu_n: T^*\C^n\to(\lt^n)^*$ is given by
\[
\mu_n(z,w) = (z_1 w_1,...,z_n w_n).
\]
Let $T^k\le T^n$ be an algebraic subtorus, $T^d=T^n/T^k~(d=n-k)$, and let $\lt^k,\lt^d$ be their respective Lie algebras. We have the exact sequence
\[
0\to\lt^k\overset{\iota}{\to}\lt^n\overset{\normals}{\to}\lt^d\to 0
\]
and, dualizing,
\[
0\to(\lt^d)^* \overset{\normals^*}{\to}(\lt^n)^*\overset{\iota^*}{\to}(\lt^k)^*\to 0.
\]

Throughout this paper, we will often identify elements of $\lt^k$ with their images in $\lt^n$. Taking $\mu_k=\iota^*\circ\mu_n$ we obtain a moment map for the $T^k$ action on $T^*\C^n$. Fix a character $\rmom$ of $T^k$ and a level $\cmom \in(\lt^k)^*$. We define the associated {\em hypertoric variety} by

\[
\MM_{\rmom, \cmom} = \mu_k^{-1}(\cmom)\quot_{\!\rmom} T^k,
\]
where we take the GIT quotient with respect to the linearization determined by $\theta$.

The induced $T^d$ action on $\MM_{\rmom, \cmom}$ is hyperhamiltonian. There is a further action of $\DT$ dilating the fibers of $T^*\C^n$, which scales the symplectic form by $\hbar$. This also preserves $\mu_k^{-1}(0)$, and visibly descends to an action of $\DT$ on $\MM_{\theta,0}$ commuting with the $T^d$ action. In the notation of the previous section, $G = T^d$ and $\GG = \FT$.

\subsection{Hyperplane arrangements}

The geometry of hypertoric varieties can be described by means of a hyperplane arrangement. The Lie algebras $\lt^k, \lt^n$ and $\lt^d$ inherit integral structures from the associated tori. Let $(\lt^k)^*_{\RR}=(\lt^k)^*_{\mathbb{Z}}\otimes\RR$, and define  $(\lt^n)^*_{\RR}$ and $(\lt^d)^*_{\RR}$ analogously.

Choose a lift $\hat{\rmom}$ of $\rmom$ to $(\lt^n)^*$, with coordinates $\hat{\rmom}_i$. Write $e_i$ for the standard generators of $(\lt^n)_{\mathbb{Z}}$ and $a_i$ for their images in $(\lt^d)_{\mathbb{Z}}$. Define hyperplanes $H_1,...,H_n$ in $(\lt^d)^*$ by
\begin{equation}
\label{linearhyp}
H_i = \left\lbrace x \in (\lt^d)_{\mathbb{R}}^* : a_i \cdot x + \hat{\rmom}_i  = 0 \right \rbrace.
\end{equation}
These are the intersections of $(t^d)^*+\hat{\rmom}$ with the coordinate hyperplanes of $(t^n)^*$. We call the collection of oriented affine hyperplanes $\arrangement =\{H_i\}_{i=1}^n$ the {\em hyperplane arrangement} associated to the hypertoric manifold $\MM_{\theta,0}$. The arrangement $\arrangement$ is called
\begin{itemize}
\item {\em Simple} if every subset of $m$ hyperplanes with nonempty intersection intersects in codimension $m$;
\item {\em Unimodular} if every collection of $d$ independent vectors in $\{a_1,...,a_n\}$ spans $\lt^d$ over $\mathbb{Z}$;
\item {\em Smooth} if it is simple and unimodular.
\end{itemize}
The associated hypertoric variety is smooth if and only if the arrangement is smooth.
The affinization map is the canonical GIT map $\MM_{\theta,0}\to\MM_{0,0}$, and it is birational. In particular, smooth hypertoric varieties are symplectic resolutions. \emph{We assume from now on that $\MM_{\rmom,0}$ is smooth.} To reduce clutter, in the sequel we fix $\rmom$ and write $\MM$ for $\MM_{\theta, 0}$.

\begin{example}
  The hypertoric variety $T^*\PP^n$ is obtained as the quotient of $T^*\C^{n+1}$ by the action of the diagonal torus. The corresponding hyperplane arrangement is composed of $n+1$ hyperplanes bounding a simplex in $\RR^n$. Dividing instead by the complementary torus $\{(\zeta_1,...,\zeta_{n+1})\in(\C^*)^{n+1}\vert \prod\zeta_i=1\}$ we obtain the $\tilde{\mathcal{A}}_n$ surface, a crepant resolution of a type $A_n$ singularity. The corresponding arrangement is simply $n+1$ points on a line.
\end{example}

\subsection{Cohomology}
 Consider the character of $T^n$ given by $diag(\zeta_1,...,\zeta_n)\mapsto\zeta_i$. Restricting to $T^k$, we obtain an induced $\FT$-equivariant line bundle on $\MM$, with $\FT$-equivariant Euler class $u_i$, corresponding to the divisor $z_i=0$. Recall that $\hbar$ is the weight of the symplectic form under the $\DT$ action; the divisor $w_i=0$ thus corresponds to the class $\hbar - u_i$.

\begin{definition}
  \label{circuit_defn}
  A {\em circuit} $S\subseteq \arrangement$ is a minimal subset satisfying $\cap_{i \in S} H_i =\emptyset$. Alternatively, $S$ corresponds to relation in $\lt^d$
\[ \sum_{i\in S^+ \subset S} a_i \ \ - \sum_{i \in S^- = S \setminus S^+} a_i = 0 \]
containing a minimal set of terms. We fix the splitting $S =S^+ \sqcup S^-$ so that if we set
\begin{equation}
\label{defbeta}
\beta_S = \sum_{i\in S^+} e_i - \sum_{i \in S^-} e_i
\end{equation}
then
$\hat{\rmom}(\beta_S) > 0$. We can view $\beta_S$ as an element of $\lt^k_{\mathbb{Z}} = H_2(\MM,\mathbb{Z})$; we will later see that it is a coroot.
\end{definition}

\begin{theorem} \label{classicalrelations} \cite{HP04}
  \begin{align}
    H^{\bullet}_{T^d\times \DT}(\MM)\cong\mathbb{Z}[u_1,...,u_n,\hbar] / I
  \end{align}
  where the ideal $I$ is generated by the relations
  \[
  \prod_{i\in S^+}u_i \prod_{i\in S^-}(\hbar-u_i)
  \]
  for all circuits $S$.
\end{theorem}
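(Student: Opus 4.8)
The plan is to present $H^\bullet_{T^d\times\DT}(\MM)$ as a quotient of $H^\bullet_{T^n\times\DT}(T^*\C^n)$ by restriction to the semistable locus. Since $\MM=\mu_k^{-1}(0)^{ss}/T^k$ with the $T^k$-action on the stable locus free (by smoothness), we have $H^\bullet_{T^d\times\DT}(\MM)=H^\bullet_{T^n\times\DT}\big(\mu_k^{-1}(0)^{ss}\big)$; and since $T^*\C^n$ is $T^n\times\DT$-equivariantly contractible (scaling to the origin is equivariant), $H^\bullet_{T^n\times\DT}(T^*\C^n)=\mathbb{Z}[u_1,\dots,u_n,\hbar]$, with $u_i$ restricting to the given class on $\MM$ and with equivariant Poincar\'e dual of the coordinate subspace $\{z_i=0\}$ (resp.\ $\{w_i=0\}$) equal to $u_i$ (resp.\ $\hbar-u_i$). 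The first step is generation: applying equivariant Kirwan surjectivity to $\mu_k^{-1}(0)$, which is a $T^n\times\DT$-invariant cone in $T^*\C^n$, the restriction $\mathbb{Z}[u_1,\dots,u_n,\hbar]\to H^\bullet_{T^d\times\DT}(\MM)$ is onto. The Hesselink--Kirwan--Ness stratification of the unstable part of $\mu_k^{-1}(0)$ is equivariantly perfect because the equivariant Euler classes of the normal bundles to its strata are monomials in the $u_i$ and the $\hbar-u_i$, hence non-zero-divisors in the (polynomial) equivariant cohomology of each stratum; so $u_1,\dots,u_n,\hbar$ generate.

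The second step, the cleanest one, is to check that the stated relations lie in the kernel. Fix a circuit $S=S^+\sqcup S^-$ with $\beta_S=\sum_{S^+}e_i-\sum_{S^-}e_i\in\lt^k$, and let $Y_S\subset T^*\C^n$ be the coordinate subspace $\{z_i=0\ (i\in S^+)\}\cap\{w_i=0\ (i\in S^-)\}$, cut out transversally by sections of equivariant line bundles so that its equivariant Poincar\'e dual is exactly $\prod_{i\in S^+}u_i\prod_{i\in S^-}(\hbar-u_i)$. The key point is that $Y_S$ is disjoint from the $\rmom$-semistable locus of $T^*\C^n$: a point is $\rmom$-semistable for $T^k$ iff $\rmom$ lies in the cone spanned by the $T^k$-weights of the coordinate axes supporting it, and for a point of $Y_S$ every such axis pairs non-positively with $\beta_S$ — the $z_i$-axes $(i\in S^-)$ and the $w_i$-axes $(i\in S^+)$ that can survive on $Y_S$ pair to $-1$, all others to $0$ — whereas $\langle\rmom,\beta_S\rangle=\hat{\rmom}(\beta_S)>0$ by the normalization in Definition~\ref{circuit_defn}. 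Hence $\prod_{i\in S^+}u_i\prod_{i\in S^-}(\hbar-u_i)$, being the Poincar\'e dual of $Y_S$, restricts to zero on the semistable locus and so vanishes in $H^\bullet_{T^d\times\DT}(\MM)$. This produces a surjection $\phi\colon A:=\mathbb{Z}[u_1,\dots,u_n,\hbar]/I\twoheadrightarrow B:=H^\bullet_{T^d\times\DT}(\MM)$.

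The final step is to show $\phi$ is injective by matching graded Hilbert series. On the target side, a generic subtorus of $T^d$ acts on $\MM$ with isolated fixed points and a Bia\l{}ynicki-Birula decomposition into affine cells, so $H^\bullet(\MM)$ is concentrated in even degrees and $\MM$ is equivariantly formal; thus $B$ is free over $R:=H^\bullet_{T^d\times\DT}(\mathrm{pt})$ with $\mathrm{Hilb}(B)=P_\MM(t)/(1-t^2)^{d+1}$, and by the Bielawski--Dancer/Hausel--Sturmfels computation of the Betti numbers $P_\MM(t)=\sum_i h_i t^{2i}$ is the $h$-polynomial of the independence complex $\mathrm{IN}(M)$ of the matroid of the arrangement. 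On the source side, in a monomial order with each $u_i$ above $\hbar$, the initial term of the relation for $S$ is $\prod_{i\in S}u_i$; since circuits are exactly the minimal dependent sets, these squarefree monomials generate the Stanley--Reisner ideal of $\mathrm{IN}(M)$, so $\mathrm{in}(I)\supseteq I_{\mathrm{SR}}(\mathrm{IN}(M))\,\mathbb{Z}[u_1,\dots,u_n,\hbar]$ and hence $\mathrm{Hilb}(A)=\mathrm{Hilb}\big(\mathbb{Z}[u,\hbar]/\mathrm{in}(I)\big)\le\mathrm{Hilb}(\mathbb{Z}[\mathrm{IN}(M)])/(1-t^2)=P_\MM(t)/(1-t^2)^{d+1}=\mathrm{Hilb}(B)$. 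Together with the surjectivity of $\phi$ this forces equality, so $\phi$ is an isomorphism (and, a posteriori, the circuit relations are a Gr\"obner basis). I expect this last step to be the real obstacle: the dimension count rests on combinatorics of the matroid — both the identification of $\mathrm{in}(I)$ with a Stanley--Reisner ideal and, above all, the input that the Betti numbers of $\MM$ are the $h$-numbers of $\mathrm{IN}(M)$ — so one either imports the known Betti computation or, to stay self-contained, proves directly, via the circuit-elimination axioms, both that the relations form a Gr\"obner basis and that the resulting monomial quotient has the correct Hilbert series.
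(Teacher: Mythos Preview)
The paper does not prove this theorem: it is quoted with a citation to \cite{HP04} and used as a black box (the introduction explicitly says ``our proof relies on their result''). There is therefore no proof in the paper to compare your proposal against.

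For what it is worth, your outline follows a sound and essentially standard route to results of this type. The verification that each circuit relation lies in the kernel is clean and correct: the coordinate subspace $Y_S=\{z_i=0:i\in S^+\}\cap\{w_i=0:i\in S^-\}$ is indeed entirely $\rmom$-unstable by exactly the pairing argument with $\beta_S$ you give, so its equivariant class restricts to zero on the quotient. Two points deserve a bit more care than you give them. First, Kirwan surjectivity here is being applied to the singular affine cone $\mu_k^{-1}(0)$ rather than to a smooth projective variety, so one cannot simply invoke the classical statement; this is precisely what Konno and Harada--Proudfoot establish for hypertoric varieties, and your sketch of equivariant perfectness of the stratification is the right mechanism but not automatic. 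Second, as you yourself flag, the Hilbert-series match in your injectivity step imports the Bielawski--Dancer/Hausel--Sturmfels Betti-number computation, which is itself of comparable depth to the theorem you are proving; the alternative you mention (showing directly that the circuit relations form a Gr\"obner basis with the expected initial ideal) is feasible but is again real work rather than a formality.
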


Specializing the equivariant parameters also gives linear relations in $H^{\bullet}(\MM,\C)$, as follows. Fix a basis $b_j$ of $\lt^d$ and let $a_{ij}$ be the corresponding coefficients of $a_i$. Fix dual equivariant parameters $\eqpaj$ for $T^d$. Then
\[  \eqpaj = \sum_{i \in \arrangement} a_{ij}u_i. \]

%
%

\section{Quantum multiplication by a divisor}
\label{secdivisormult}
\noindent
We now turn to our main problem, namely the quantum cohomology of hypertoric varieties. In \ref{secdisclocus} we recall the following:
\begin{proposition}
\label{discriminant_circuits}
  Root hyperplanes (see \ref{secdeformationsympres}) are indexed by circuits $S \subset \arrangement$.
\end{proposition}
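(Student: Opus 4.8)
The plan is to analyze the deformation family $\tilde{\MM} \to H^2(\MM,\C) = (\lt^k)^*$ explicitly, using the description of hypertoric varieties as GIT quotients, and to identify precisely which fibers acquire new effective curve classes. Since $H^2(\MM,\C)$ is canonically identified with $(\lt^k)^*$ via the moment map level, the deformation family is just $\{\MM_{\rmom, \cmom}\}_{\cmom \in (\lt^k)^*_\C}$, i.e. we vary the complex moment map level $\cmom$ while keeping the stability parameter $\rmom$ fixed. Geometrically, varying $\cmom$ translates the hyperplanes $H_i$ in directions normal to themselves (within the complexification), so the combinatorial type of the arrangement is what controls the birational geometry of each fiber. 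The first step is therefore to recall, from the hypertoric dictionary, that the fiber $\MM_{\rmom,\cmom}$ fails to be affine — equivalently, the affinization map $\MM_{\rmom,\cmom} \to \MM_{0,\cmom}$ has positive-dimensional fibers — exactly when some bounded ``flat'' appears in the arrangement, and that the minimal such degenerations correspond to the relations among the normals $a_i$ with minimal support, which by Definition \ref{circuit_defn} are precisely the circuits $S$.

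Concretely, I would argue as follows. A primitive effective curve class $\beta \in H_2(\MM,\mathbb{Z}) = \lt^k_{\mathbb{Z}}$ is represented by a $T^d$-invariant rational curve; such curves in a hypertoric variety are themselves hypertoric (of dimension $1$), hence associated to an arrangement of some subset of the $H_i$ in $\RR^1$ bounding a bounded segment. Tracing through the moment map description, the relation in $\lt^d$ cutting out this one-dimensional sub-arrangement is exactly a relation $\sum_{i \in S^+} a_i - \sum_{i \in S^-} a_i = 0$ with $S$ a circuit, and the corresponding class in $\lt^k$ is $\beta_S = \sum_{i\in S^+} e_i - \sum_{i \in S^-} e_i$ of Equation (\ref{defbeta}). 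Conversely, for each circuit $S$ one checks that $\beta_S$ does lie in $\lt^k \subset \lt^n$ (the defining relation says exactly that $\normals(\beta_S) = 0$) and is primitive by unimodularity, and that it becomes effective precisely when $\cmom$ lies on the hyperplane where the associated segment degenerates to having nonnegative length — this is the root hyperplane $\roothyp_{\beta_S}$. The pairing $\omega(\beta_S)$, i.e. the $\cmom$-component pairing, vanishes there, matching the description of root hyperplanes in \ref{secdeformationsympres}.

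The two containments then give the bijection: every root hyperplane arises from a primitive effective curve class, which is some $\beta_S$, so its root hyperplane is $\roothyp_{\beta_S}$; and each circuit $S$ yields a genuine root hyperplane $\roothyp_{\beta_S}$. I expect the main obstacle to be the careful bookkeeping in the forward direction — showing that \emph{every} primitive coroot is of the form $\beta_S$ for a circuit, rather than a nontrivial combination of several $\beta_S$. This requires knowing that the $T^d$-invariant curves generate the cone of effective curves and that the primitive generators of its extremal rays are exactly the circuit classes; this is where unimodularity and simplicity of the arrangement (smoothness of $\MM$) enter, ensuring the relevant toric combinatorics behaves well and no curve class splits into a smaller effective one. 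Once that is in place, the identification of $\roothyp_\beta$ with the explicit hyperplane $\{\hat\rmom(\beta_S) \text{ paired against } \cmom = 0\}$ in $(\lt^k)^*$ is a direct computation with the moment map, and the proposition follows.
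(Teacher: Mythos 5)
Your proposal is broadly in the right spirit, but it is doing substantially more work than the paper needs, and the extra work contains an acknowledged gap. In the paper, Proposition~\ref{discriminant_circuits} is essentially a citation plus a one-line combinatorial check: Konno (\cite{Konno}) shows that $\cmom\in(\lt^k)^*$ lies in the discriminant locus if and only if it lies in a codimension-one subspace of the form $K_S=\operatorname{span}(\iota^*e_i^\vee: i\notin S)$, and then one observes that the minimal such $S$ are exactly the circuits. That observation is a purely linear-algebraic statement: the annihilator of $K_S$ in $\lt^k=\ker(\normals)$ is the space of linear relations among $\{a_i\}_{i\in S}$, and this is one-dimensional with every index appearing precisely when $S$ is a circuit. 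No curve-counting or Mori cone analysis is required.

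Your first paragraph is essentially the same move (recall the combinatorial description of where fibers of the deformation fail to be affine, note the minimal degenerations are circuits), so at that level you are in agreement with the paper. But your second, ``concrete'' paragraph sets out to re-derive this by classifying primitive effective curve classes in $\MM_0$ via $T^d$-invariant curves and the Mori cone. That route can be made to work, but it is genuinely harder, and as you yourself flag, the forward direction --- that every primitive effective curve class is of the form $\beta_S$ for a circuit $S$, i.e.\ that the circuit classes generate the cone of effective curves and are the primitive generators of extremal rays --- is nontrivial and left unproved. You also gloss over the identification of compact $T^d$-invariant curves with bounded edges of the arrangement and the relation between their classes and the $\beta_S$ (the invariant $\mathbb{P}^1$'s sit inside the $\mathbb{P}^{|S|-1}$-fibrations of Proposition~\ref{codim1structure}, but nailing their classes down is exactly the bookkeeping you defer). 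So: the high-level idea is aligned with the paper, the geometric elaboration is a different and heavier route, and that heavier route is left with an acknowledged hole exactly where it diverges from the citation-based argument. If you want a self-contained proof, you should either supply the Mori cone argument in full or simply invoke Konno's characterization of the discriminant locus directly, as the paper does.
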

To each circuit $S$ corresponds a primitive coroot $\beta_S$. In what follows, it will be convenient to use the following modified parameter:

\begin{equation}
  \label{canonical_theta_shift}
  q^{S} = (-1)^{\vert S\vert} q^{\beta_S}.
\end{equation}
In the language of \cite{QGQC}, this is the shift by the canonical theta characteristic, as will be clear from the argument below.

\begin{theorem}
\label{ht_qmult_by_divisor}
  The operator of quantum multiplication by a divisor $u$ is given by the following formula:
\begin{equation}
  u * \text{---}= u \cup \text{---}+ \hbar \cdot\sum_S \frac{q^S}{1-q^S} (u,\beta_S) L_S(\text{---})
\end{equation}
  where $L_S$ is the specialization of a certain explicit Steinberg correspondence.
\end{theorem}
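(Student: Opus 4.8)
The plan is to combine the two general facts from Section~\ref{BMO_review} --- namely Proposition~\ref{specialization}, which reduces the equivariant operator $L_{m\beta_S}$ to a computation of two-point invariants on a generic fiber $X_\cmom$ where $\cmom \in K_{\beta_S}$, together with the divisor formula (\ref{BMO_equation}) --- with an explicit geometric model for the curves in the class $\beta_S$. By Proposition~\ref{discriminant_circuits} a generic fiber over $K_{\beta_S}$ is again a (deformed) hypertoric variety whose only effective curve classes are the nonnegative multiples $m\beta_S$. As announced in the introduction, the key step is to show that the locus swept out by these curves --- more precisely the relevant component of the image of $\moduli_{0,2}(X_\cmom, m\beta_S)$ --- sits inside a $\PP^{|S|-1}$-bundle over an affine base, where $S^+$ indexes one coordinate chart grouping and $S^-$ the other. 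Concretely, the relation $\sum_{i\in S^+}a_i - \sum_{i\in S^-}a_i = 0$ means that the sub-hypertoric variety cut out by the hyperplanes $\{H_i : i \notin S\}$ (after the deformation) is modelled on $T^*\PP^{|S^+|-1} \times T^*\PP^{|S^-|-1}$ fibered over an affine factor, and the curves in class $\beta_S$ are the $\PP^1$'s joining the two projective factors; one identifies them with the fibers of a $\PP^{|S|-1}$-bundle.

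The main computational step is then to evaluate the two-point invariants $\langle \gamma_1, \gamma_2 \rangle_{0,2,m\beta_S}^{X_\cmom}$ on this bundle. The moduli space $\moduli_{0,2}(\PP^r\text{-bundle}, m[\text{line}])$ and its evaluation map are classical; the virtual class reduces to an honest fundamental class and the pushforward $(\mathrm{ev}_1 \times \mathrm{ev}_2)_*$ can be computed by a localization or a direct projective-bundle argument. I expect the correspondence $L_S$ to come out as (a specialization of) the class $[X_\cmom^{S^+} \times_{\text{base}} X_\cmom^{S^-}]$ --- the product of the two ``poles'' of the $\PP^1$-family --- with a normal-bundle Euler-class weight that produces the factor $(u, \beta_S)$ after applying the divisor equation. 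Summing the geometric series $\sum_{m \geq 1} (u, m\beta_S) q^{m\beta_S} L_{m\beta_S} = \sum_{m\geq 1} m (u,\beta_S) q^{m\beta_S} L_S$; here I expect $L_{m\beta_S}$ to be independent of $m$ up to the combinatorial sign, so that after the theta-shift $q^S = (-1)^{|S|} q^{\beta_S}$ of (\ref{canonical_theta_shift}) the sum $\sum_{m \geq 1} m (q^S)^m = q^S/(1-q^S)^2$ --- no, rather $\sum_{m\geq 1} m x^m = x/(1-x)^2$, but the divisor equation already extracted one factor of $(u,\beta_S)$ and an $m$, so what remains to sum is $\sum_{m \geq 1} (q^S)^m = q^S/(1-q^S)$, matching the stated formula. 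The sign $(-1)^{|S|}$ is exactly what makes the multicovers of the chain of $\PP^1$'s contribute with the right alternating pattern; this is the "canonical theta characteristic" bookkeeping.

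The hard part will be pinning down the precise geometry near the deformed circuit locus: showing rigorously that (i) for $\cmom$ generic on $K_{\beta_S}$ all effective curves in $X_\cmom$ of class $m\beta_S$ are supported on the expected $\PP^{|S|-1}$-bundle, and (ii) the virtual class of $\moduli_{0,2}(X_\cmom, m\beta_S)$ agrees with the fundamental class of the corresponding moduli space on the bundle, i.e.\ that the excess obstruction bundle contributes only the normal Euler class one sees in the ambient hypertoric variety. For (i) one analyzes which hyperplanes $H_i$ are "active" along $\beta_S$: the defining property of a circuit is that $\beta_S$ pairs nontrivially only with $u_i, i\in S$, so the curve is confined to the coordinate strata indexed by the complement, and one checks these strata form the claimed bundle using unimodularity/smoothness of the arrangement. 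For (ii), since $X_\cmom$ is holomorphic symplectic with trivial canonical bundle, the obstruction theory splits as (deformations within the bundle) $\oplus$ (normal directions), and the normal part is a sum of line bundles whose Euler class is computed by the classical presentation of Theorem~\ref{classicalrelations}; this is where the $\hbar - u_i$ factors enter, since the normal directions pair $w_i$-weights against $z_i$-weights. Once these geometric inputs are in place, assembling the formula is bookkeeping with the divisor equation and the geometric series above, and the identification of $L_S$ as the specialization via Proposition~\ref{specialization} is immediate.
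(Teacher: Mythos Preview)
Your overall strategy is right --- use (\ref{BMO_equation}), pass to a subregular fiber $\MM_\cmom$ with $\cmom\in K_{\beta_S}$, compute two-point invariants there, and specialize back via Proposition~\ref{specialization}. But two concrete steps are off.

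First, the geometric model of the curve locus is not a product $T^*\PP^{|S^+|-1}\times T^*\PP^{|S^-|-1}$ with $\PP^1$'s running between ``poles''. What the paper actually shows (Proposition~\ref{codim1structure}) is that all projective curves in $\MM_\cmom$ lie in a single subvariety $\Pbundle^S$ which is a $\PP^{|S|-1}$-bundle over an \emph{affine} hypertoric base $\Pbundle_0^S$; the splitting $S=S^+\cup S^-$ determines which coordinates appear as $z_i$ versus $w_i$ in the fiber $\C^{|S|}/\C^*\cong\PP^{|S|-1}$, not two separate projective factors. Because the base is symplectic and the fibers isotropic, the normal bundle of each fiber is its cotangent bundle, so the whole computation reduces to the equivariant two-point invariants of $T^*\PP^{|S|-1}$. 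The Steinberg correspondence is then $[\Pbundle^S\times_{\Pbundle_0^S}\Pbundle^S]$, the fiberwise diagonal of the full $\PP^{|S|-1}$-bundle with itself, not a product of two ``pole'' loci.

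Second, your expectation that $L_{m\beta_S}$ is independent of $m$ is incorrect, and this is exactly why your geometric series wobbles between $q/(1-q)$ and $q/(1-q)^2$. The known computation for $T^*\PP^{|S|-1}$ (cited from \cite{QGQC}) gives
\[
L_{m\beta_S}=\frac{(-1)^{|S|}}{m}\,[\Pbundle^S\times_{\Pbundle_0^S}\Pbundle^S].
\]
The $1/m$ multicover factor is essential: it cancels the $m$ coming from $(u,m\beta_S)=m(u,\beta_S)$ in (\ref{BMO_equation}), leaving $\sum_{m\geq 1}(q^S)^m=q^S/(1-q^S)$ after the shift (\ref{canonical_theta_shift}). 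Your proposed obstruction-bundle analysis (ii) is not needed once you quote this result; the paper simply invokes it rather than redoing the virtual-class computation.
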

\noindent
The rest of this section is devoted to the proof of the theorem.

\subsection{Deformation of hypertoric varieties}
In \cite{Konno}, Konno identifies the periods of $\omega$ with the level of the moment map. In particular, in the diagram (\ref{deformation_diagram}) the base of the universal family $H^2(\MM,\C)$ is isomorphic to $(\lt^k)^*$, and its fiber over $\cmom\in(\lt^k)^*$ is the hypertoric variety $\MM_{\cmom}=\mu_k^{-1}(\cmom)\quot T^k$.

Our study of the variation of $\cmom$ closely follows Konno's study of the variation of the stability parameter $\rmom$ (equivalently, the level of the real component of the hyperk\"ahler moment map) in {\em loc. cit.}

\subsubsection{Discriminant locus}
\label{secdisclocus}
Let $ \lbrace e^{\vee}_i \rbrace$ be the dual basis to $ \lbrace e_i \rbrace$ in $(\lt^n)^*$.
\begin{proposition}[{\cite{Konno}}]
$\cmom \in (\lt^k)^*$ is in the discriminant locus iff it lies in a codimension $1$-hyperplane spanned by a subset of $\{\iota^* e^{\vee}_i\}$.
\end{proposition}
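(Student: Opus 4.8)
The plan is to translate the condition "$\lambda$ lies in a root hyperplane $K_{\beta_S}$ for some circuit $S$" into linear algebra on $(\lt^k)^*$, using the identifications already set up in the excerpt. Recall from Konno's theorem that $H^2(\MM,\C)\cong(\lt^k)^*$, with the fiber over $\lambda$ being $\MM_\lambda=\mu_k^{-1}(\lambda)\quot T^k$. By Proposition~\ref{discriminant_circuits}, root hyperplanes are indexed by circuits, and to each circuit $S$ we have the primitive coroot $\beta_S=\sum_{i\in S^+}e_i-\sum_{i\in S^-}e_i\in\lt^k_{\mathbb{Z}}$, viewed as a curve class in $H_2(\MM,\mathbb{Z})$. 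The root hyperplane $K_{\beta_S}$ is by definition the zero locus of the pairing $\omega(\beta_S)$, i.e.\ $\{\lambda\in(\lt^k)^*:\langle\lambda,\beta_S\rangle=0\}$ under the period identification. So the whole task reduces to identifying this annihilator hyperplane of $\beta_S$ concretely.

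First I would unwind $\langle\lambda,\beta_S\rangle$ through the dual exact sequence $0\to(\lt^d)^*\xrightarrow{a^*}(\lt^n)^*\xrightarrow{\iota^*}(\lt^k)^*\to0$. Pick any lift $\hat\lambda\in(\lt^n)^*$ of $\lambda$, so $\lambda=\iota^*\hat\lambda$; then $\langle\lambda,\beta_S\rangle=\langle\iota^*\hat\lambda,\beta_S\rangle$, and since we are identifying $\beta_S\in\lt^k$ with its image $\iota(\beta_S)\in\lt^n$, this equals $\langle\hat\lambda,\iota(\beta_S)\rangle=\sum_{i\in S^+}\hat\lambda_i-\sum_{i\in S^-}\hat\lambda_i$, where $\hat\lambda_i=\langle\hat\lambda,e_i\rangle$. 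This is well-defined independent of the lift precisely because $\beta_S\in\lt^k$, i.e.\ because $\sum_{i\in S^+}a_i-\sum_{i\in S^-}a_i=0$ in $\lt^d$ (the defining circuit relation), which is exactly the statement that $\iota(\beta_S)$ pairs to zero against the image of $a^*$. So $K_{\beta_S}=\{\lambda:\sum_{i\in S^+}\hat\lambda_i=\sum_{i\in S^-}\hat\lambda_i\}$, a hyperplane in $(\lt^k)^*$ cut out by a linear functional supported on the indices in $S$.

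Next I would reinterpret this hyperplane via the classes $\iota^*e_i^\vee\in(\lt^k)^*$. Since $\hat\lambda_i=\langle\hat\lambda,e_i\rangle=\langle e_i^\vee,\ \rangle$ evaluated appropriately, the functional $\lambda\mapsto\sum_{i\in S^+}\hat\lambda_i-\sum_{i\in S^-}\hat\lambda_i$ descends to a functional on $(\lt^k)^*$ whose value at $\lambda=\iota^*\hat\lambda$ depends only on $\lambda$; dually, $K_{\beta_S}$ is the span of $\{\iota^*e_i^\vee : i\in S^\perp\}$ — more precisely, the kernel of the circuit functional is spanned by those $\iota^*e_j^\vee$ together with the relations among them, and one checks by the minimality of the circuit that the span of $\{\iota^*e_i^\vee:i\notin S\}$ together with appropriate combinations gives a codimension-one subspace. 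The cleanest route is: the circuit $S$ being minimal means the $\{a_i\}_{i\in S}$ satisfy a unique (up to scale) relation, equivalently $\{\iota^*e_i^\vee\}_{i\in S}$ span a hyperplane in $(\lt^k)^*$ (one dependency among $|S|$ vectors whose images $a_i$ have a $(|S|-1)$-dimensional span), and that hyperplane is exactly $K_{\beta_S}$. Conversely, any codimension-one subspace spanned by a subset $\{\iota^*e_i^\vee\}_{i\in T}$ forces a minimal dependency among the $a_i$, hence contains a circuit $S\subseteq T$ with $K_{\beta_S}$ equal to that span. Combining the two directions with Proposition~\ref{discriminant_circuits} gives the claim.

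The main obstacle I anticipate is the bookkeeping in the last step: matching "codimension-one subspace spanned by a subset of $\{\iota^*e_i^\vee\}$" with "annihilator of a primitive coroot $\beta_S$" on the nose, including the sign/orientation data $S=S^+\sqcup S^-$ and the normalization $\hat\rmom(\beta_S)>0$, and making sure the correspondence $S\leftrightarrow$ hyperplane is the same as in Proposition~\ref{discriminant_circuits} rather than merely a bijection of cardinalities. Since Konno already proves the statement, I expect the argument to consist of citing \cite{Konno} for the hard inclusion and supplying the dictionary above to align his stability-parameter analysis with the moment-level (period) picture; the transversality/genericity needed to know $\MM_\lambda$ has $\beta_S$ as its unique primitive effective class on $K_{\beta_S}$ is likewise inherited from \emph{loc.\ cit.}
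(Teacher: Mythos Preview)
The paper gives no proof of this proposition; it is simply quoted from \cite{Konno}. What follows the proposition in the paper is the remark that a sub-regular $\lambda$ determines a unique hyperplane $K_S=\mathrm{span}(\iota^*e_i^\vee:i\notin S)$ and that $S$ is then a circuit, which the paper calls ``an easy exercise.'' So there is nothing to compare your argument to on the paper's side beyond that one-line remark.

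That said, your proposal has a structural circularity. You invoke Proposition~\ref{discriminant_circuits} (root hyperplanes are indexed by circuits) as input, but in the paper's logical flow that proposition is announced and then \emph{justified} in \S\ref{secdisclocus} as a consequence of the Konno proposition you are trying to prove. So as written, your argument assumes its conclusion. Your linear algebra---identifying the annihilator of $\beta_S$ in $(\lt^k)^*$ with $\mathrm{span}\{\iota^*e_i^\vee:i\notin S\}$ and matching minimal dependencies among the $a_i$ with circuits---is correct and is exactly the ``easy exercise'' the paper alludes to. What your proposal does \emph{not} supply is the genuinely geometric content of Konno's result: that $\MM_\lambda$ fails to be affine (equivalently, carries a positive-dimensional projective subvariety, equivalently lies on some root hyperplane) precisely when $\lambda$ lies in one of these coordinate hyperplanes. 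That step requires an analysis of the $\theta$-unstable locus in $\mu_k^{-1}(\lambda)$, which is what \cite{Konno} does and what the paper imports.
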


Let $\cmom$ be {\em sub-regular}, i.e. it lies on a unique root hyperplane
\[
K_S = \text{span}(\iota^*e^{\vee}_i : i \notin S),
\]
for some $S \subset \arrangement$. It is an easy exercise to check that $S$ is a circuit. Let $\beta_S$ be the corresponding element of $\lt_{\mathbb{Z}}^k$. For simplicity, in the rest of this section we will assume $S = \lbrace 1, 2, ..., |S| \rbrace$.

\subsubsection{Structure of subregular deformations}

\begin{proposition}[{\cite{Konno}},5.10]
  Let $(z,w)\in\mu_k^{-1}(\cmom)$. Then $(z,w)$ is $\rmom$-stable iff either of the following conditions hold:
  \begin{enumerate}
    \item $z_i\ne 0$ for some $i \in S^+$;
    \item $w_i\ne 0$ for some $i \in S^-$.
  \end{enumerate}
\end{proposition}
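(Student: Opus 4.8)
The plan is to deduce the stability condition directly from the Hilbert--Mumford criterion. Write $\MM_{\rmom,\cmom}=\mu_k^{-1}(\cmom)\quot_{\rmom}T^k$. Since $T^k\hookrightarrow T^n$, no nonzero cocharacter of $T^k$ acts trivially, and since the arrangement is smooth, $\rmom$-semistable points of $\mu_k^{-1}(\cmom)$ are $\rmom$-stable; hence $(z,w)\in\mu_k^{-1}(\cmom)$ is $\rmom$-stable iff $\hat{\rmom}(v)>0$ for every nonzero $v\in\lt^k_{\mathbb{Z}}\subseteq\lt^n_{\mathbb{Z}}$ for which the one-parameter subgroup $t\mapsto(t^{v_i}z_i,\,t^{-v_i}w_i)_i$ has a limit as $t\to 0$ inside $\mu_k^{-1}(\cmom)$. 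That limit exists precisely when $v_i\ge 0$ whenever $z_i\ne 0$ and $v_i\le 0$ whenever $w_i\ne 0$. So, writing $A_z=\{i:z_i\ne 0\}$ and $A_w=\{i:w_i\ne 0\}$, stability of $(z,w)$ depends only on the pair $(A_z,A_w)$: it holds iff $\hat{\rmom}$ is strictly positive on $C\setminus\{0\}$, where $C=\{v\in\lt^k_{\mathbb{R}}:v_i\ge 0\ \forall i\in A_z,\ v_i\le 0\ \forall i\in A_w\}$ is a rational polyhedral cone.

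For the forward implication, suppose neither (1) nor (2) holds, i.e. $A_z\cap S^+=\emptyset$ and $A_w\cap S^-=\emptyset$. Then $-\beta_S$ lies in $\lt^k_{\mathbb{Z}}$ because $S$ is a circuit, its $i$-th coordinate is nonnegative off $S^+$ and nonpositive off $S^-$, so $-\beta_S\in C$, while $\hat{\rmom}(-\beta_S)=-\hat{\rmom}(\beta_S)<0$ by the sign convention of Definition~\ref{circuit_defn}. Hence $\hat{\rmom}$ is not strictly positive on $C\setminus\{0\}$ and $(z,w)$ is not $\rmom$-stable.

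For the converse I would argue the contrapositive. The geometric input is subregularity, in the form of the observation: if $\mu_n(z,w)$ vanishes in every coordinate of a set $T\subseteq\{1,\dots,n\}$ containing a circuit $S'$, then $\cmom=\iota^*\mu_n(z,w)$ lies in $\mathrm{span}(\iota^*e^{\vee}_i:i\notin T)$, which is contained in $K_{S'}=\mathrm{span}(\iota^*e^{\vee}_i:i\notin S')$; since $K_{S'}$ is a root hyperplane whose annihilator in $\lt^k$ is $\mathbb{R}\beta_{S'}$ (so $K_{S'}$ determines $S'$) and $\cmom$ lies on the unique root hyperplane $K_S$, this forces $S'=S$. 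Now suppose $(z,w)$ is not $\rmom$-stable, so $C$ contains some nonzero $v$ with $\hat{\rmom}(v)\le 0$. I claim $C$ then contains $-\beta_{S'}$ for some circuit $S'$. Granting this: $-\beta_{S'}\in C$ means $z_i=0$ for all $i\in (S')^{+}$ and $w_i=0$ for all $i\in (S')^{-}$, so $\mu_n(z,w)$ vanishes on $S'$; by the observation $S'=S$, which says exactly that conditions (1) and (2) both fail, completing the contrapositive.

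It remains to prove the claim, and this is where the one genuinely delicate point lies. By Minkowski--Weyl write $C=L+\sum_j\mathbb{R}_{\ge 0}\rho_j$, with $L=C\cap(-C)$ its lineality space and $\rho_j$ the extreme-ray generators. A vector lies in $L$ iff it vanishes on all of $A_z\cup A_w$, i.e. iff it is supported on $\{i:z_i=w_i=0\}$; so if $L\ne 0$ it contains $\beta_{S'}$ for a circuit $S'$ with $z_i=w_i=0$ for all $i\in S'$, hence also $-\beta_{S'}\in C$ and the claim holds. If $L=0$ then $C$ is pointed, so $v=\sum_j c_j\rho_j$ with $c_j\ge 0$ not all zero; were every $\rho_j$ with $c_j>0$ to satisfy $\hat{\rmom}(\rho_j)>0$ we would get $\hat{\rmom}(v)>0$, a contradiction, so some such $\rho_j\in C$ has $\hat{\rmom}(\rho_j)\le 0$. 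The remaining input --- the main obstacle --- is the identification of the extreme rays of $C$: an extreme-ray generator is pinned down up to scaling by the $d$ linear equations cutting $\lt^k_{\mathbb{R}}$ out of $\lt^n_{\mathbb{R}}$ together with its tight constraints ``$v_i=0$'' ($i$ ranging over some $F\subseteq A_z\cup A_w$), i.e. it generates the line $\{v\in\lt^k_{\mathbb{R}}:\mathrm{supp}(v)\subseteq F^{c}\}$, which, being one-dimensional, is spanned by $\pm\beta_{S'}$ for the unique circuit $S'\subseteq F^{c}$ --- here unimodularity is used to exclude two independent circuit relations supported in $F^{c}$. Thus each $\rho_j$ equals $\pm\beta_{S_j}$, with that vector in $C$; combined with the previous observation, $-\beta_{S_j}\in C$ for some $j$, which is the claim. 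This is in essence Konno's argument in \emph{loc.\ cit.}
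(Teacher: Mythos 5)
The paper does not prove this proposition: it is cited directly from Konno, \emph{loc.\ cit.}\ Proposition 5.10, so there is no internal proof to compare against. Your Hilbert--Mumford argument is the standard route and, as you acknowledge, is essentially Konno's own. It checks out: the reduction of $\rmom$-stability of $(z,w)$ to strict positivity of $\hat{\rmom}$ on the rational cone $C=\{v\in\lt^k_{\RR}: v_i\ge 0\ \forall i\in A_z,\ v_i\le 0\ \forall i\in A_w\}$; the observation that failure of (1) and (2) places $-\beta_S\in C$ while $\hat{\rmom}(-\beta_S)<0$; and the converse via the Minkowski--Weyl decomposition of $C$, the identification of extreme rays with $\pm\beta_{S'}$ for circuits $S'$, and the use of subregularity of $\cmom$ (via $\mathrm{Ann}(\roothyp_{S'})=\RR\beta_{S'}$, so $\roothyp_{S'}$ determines $S'$) to force $S'=S$. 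One small nit: in the extreme-ray step, the one-dimensionality of $\{v\in\lt^k_{\RR}:\mathrm{supp}(v)\subseteq F^{c}\}$ is by itself what precludes two independent circuit relations supported in $F^{c}$, so unimodularity is not actually needed at that point; it enters, together with simpleness, in the earlier appeal to $\rmom$-semistability coinciding with $\rmom$-stability.
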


\begin{proposition}
\label{codim1structure}
$\MM_{\cmom}$ contains a codimension $|S|-1$ subvariety $\Pbundle^S$, which is a $\PP^{|S|-1}$ bundle over an affine hypertoric variety $\Pbundle^S_0$. All positive dimensional projective subvarieties in $\MM_{\cmom}$ are contained in $\Pbundle^S$.
\end{proposition}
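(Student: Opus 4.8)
The plan is to work directly with the GIT description of $\MM_{\cmom}$ and use the stability criterion of the previous proposition to exhibit $\Pbundle^S$ explicitly. First I would decompose the coordinates according to the circuit: write a point of $\mu_k^{-1}(\cmom)$ as $(z,w) = ((z_S, z_{S^c}), (w_S, w_{S^c}))$, where $z_S = (z_i)_{i\in S}$ etc. By Konno's stability proposition, $(z,w)$ is $\rmom$-stable exactly when some $z_i \neq 0$ for $i \in S^+$ or some $w_i \neq 0$ for $i \in S^-$; I would define $\Pbundle^S \subset \MM_{\cmom}$ to be the closed locus where $z_i = 0$ for all $i \in S^-$ and $w_i = 0$ for all $i \in S^+$ — i.e. the vanishing of the ``wrong'' coordinates on the circuit. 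On this locus the moment map equations $z_i w_i = (\text{component of }\cmom)$ for $i \in S$ force, together with subregularity, the circuit equations to become the defining equations of a projective space: the surviving coordinates $(z_i)_{i \in S^+}$ and $(w_i)_{i \in S^-}$, none of which may all simultaneously vanish (by stability), are acted on by the relevant one-parameter subgroup $\beta_S \subset T^k$ with all weights equal to $+1$, so their projectivization is a $\PP^{|S|-1}$.

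The next step is to identify the base. Restricting attention to the complementary coordinates $(z_{S^c}, w_{S^c})$ together with the residual quotient torus, I expect $\Pbundle^S_0$ to be the hypertoric variety associated to the sub-arrangement $\{H_i : i \notin S\}$ (with an appropriately modified level), and since $\cmom$ lies on the single root hyperplane $K_S$ and no other, this sub-arrangement has no circuits, hence its hypertoric variety is affine — this is where I would invoke that a hypertoric variety with no bounded region / no circuit is affine. The bundle structure $\Pbundle^S \to \Pbundle^S_0$ then comes from the obvious projection forgetting the circuit coordinates; I would check it is a genuine $\PP^{|S|-1}$-bundle (locally trivial) using unimodularity of the arrangement, which guarantees the relevant sub-lattice splits off and the quotient is smooth. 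The codimension count $|S| - 1$ follows because we have imposed $|S|$ coordinate hyperplane conditions but quotiented by one extra $\C^*$ worth (the circuit subgroup acts nontrivially only here).

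Finally, to show that every positive-dimensional projective subvariety of $\MM_{\cmom}$ lies in $\Pbundle^S$: a projective (hence proper, connected) subvariety must map to a point under the affinization map $\MM_{\cmom} \to \MM_{0,0}$, and any complete curve in $\MM_{\cmom}$ has class a positive multiple of the unique primitive effective class $\beta_S$ (as noted after diagram (\ref{deformation_diagram}), the subregular fiber contains only this curve class). I would argue that the locus swept by such curves is exactly $\Pbundle^S$: a curve in class $m\beta_S$ must have its image moving only in the directions along which $\beta_S$ acts with nonzero weight, i.e. the circuit coordinates, while the $S^c$-coordinates and the base point are constant along it; running this over all such curves fills out $\Pbundle^S$ and nothing more. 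The main obstacle I anticipate is the last claim — controlling \emph{all} projective subvarieties rather than just curves — which requires knowing that $\Pbundle^S_0$ is genuinely affine (so contains no complete positive-dimensional subvariety) and that the projection $\Pbundle^S \to \Pbundle^S_0$ together with the affinization map separates points off $\Pbundle^S$; making the ``constant along the base'' argument rigorous, rather than just for curves, is the delicate point, and I would handle it by showing the affinization map restricted to the open complement of $\Pbundle^S$ is quasi-finite, or equivalently that the only fibers of positive dimension sit inside $\Pbundle^S$.
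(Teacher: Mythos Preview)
Your construction of $\Pbundle^S$ as the locus $\{w_i = 0 : i \in S^+,\ z_i = 0 : i \in S^-\}$ and your identification of the fiber as $\PP^{|S|-1}$ via the one-parameter subgroup $\beta_S$ is exactly what the paper does. Two points, however, diverge from the paper, and the first is an actual error.

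Your reason for $\Pbundle^S_0$ being affine is wrong. You write that since $\cmom$ lies only on $K_S$, ``this sub-arrangement has no circuits''. That is false: deleting one circuit from an arrangement does not remove the others. In the paper's own running example with five hyperplanes, deleting the circuit $\{1,2\}$ leaves $\{3,4,5\}$, which still contains the circuit $\{3,4\}$. What is true, and what the paper uses, is that the root hyperplanes of the smaller system $p(T^k)\curvearrowright T^*\C^{n-|S|}$ are identified with a subset of the root hyperplanes of the original system (namely those $K_{S'}$ with $S'\cap S=\emptyset$), and subregularity of $\cmom$ says it avoids all of these. Hence $\cmom$ is \emph{regular} for the smaller hypertoric quotient, and a hypertoric variety at a regular level is affine --- irrespective of how many circuits its underlying matroid has. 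Your appeal to ``no bounded region / no circuit'' conflates a combinatorial statement about the matroid with a statement about the particular moment level.

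For the final claim, your route through curve classes and a quasi-finiteness argument on the complement is workable but circuitous, and as you note, passing from curves to arbitrary positive-dimensional projective subvarieties needs extra care. The paper bypasses this entirely with a direct GIT argument: a point lying on a positive-dimensional projective subvariety of $\MM_\cmom$ must correspond to a $T^k$-orbit in $\mu_k^{-1}(\cmom)$ whose closure meets the $\rmom$-unstable locus (otherwise the affinization would be a local isomorphism there). By the stability criterion just quoted, the unstable locus is $\{z_i = 0\ \forall i\in S^+,\ w_i = 0\ \forall i\in S^-\}$, and one checks that any orbit whose closure meets this set already lies in $P^S$. This handles all projective subvarieties at once, with no separate treatment of curves.
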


\begin{proof}
Define the codimension $|S|$ subspace
\[
P^S = \{w_i = 0 : i \in S^+, z_i = 0 : i \in S^- \} \subset T^*\C^n,
\]
and set
\[
\mathfrak{P}^S =  ( P^S \cap \mu_k^{-1}(\cmom) )  \quot_{\!\rmom} T^k.
\]
To construct $\Pbundle^S_0$, let $p:\lt^n\to\C^{n - |S|}$ denote the projection onto the last $n-|S|$ coordinates. Then:
\begin{enumerate}
\item $\ker p\vert_{\lt^k}=\C \beta_S$ ;
\item $p(\lt^k)^*$, i.e. the dual of the subspace $p(\lt^k)$, is canonically identified with $\roothyp_S \subseteq(\lt^k)^*$;
\item $\cmom \in p(\lt^k)^*$.
\end{enumerate}
By abuse of notation we also denote by $p$ the corresponding map $T^n\to(\C^*)^{n-|S|}$ and the projection $T^*\C^n\to T^*\C^{n-|S|}$ given by $(z_i,w_i)_{i=1}^n\mapsto(z_i,w_i)_{i \notin S}$; in particular $p(T^k)$ acts on $T^*\C^{n-|S|}$ with moment map $\mu_{n-|S|}$ landing in $K_S$. We obtain a hypertoric variety
\[
\Pbundle^S_0 =\mu_{n-|S|}^{-1}(\cmom) \quot_{\!\theta} p(T^k).
\]
Since $\cmom$ is regular as an element of $K_S$, $\Pbundle^S_0$ is affine and the stability parameter $\theta$ is immaterial.

Note that if $(z,w) \in P^S \cap \mu_k^{-1}(\cmom)$, then $p(z,w) \in \mu_{n-|S|}^{-1}(\cmom)$. Hence we have a map $\Pbundle^S \to \Pbundle^S_0$, whose fiber is isomorphic to the quotient of $\C^{|S|} = \{ z_i : i \in S^+, w_i : i \in S^- \}$ by $\C^* = \ker(p): T^k \to p(T^k)$. By the definition  (\ref{defbeta}) of $S^+$, $S^-$ and $\beta_S$, this quotient is $\PP^{|S|-1}$.

Any point in a positive-dimensional projective subvariety of $\MM_{\cmom}$ must correspond to a $T^k$ orbit in $T^*\C^n$ whose closure intersects the unstable locus. All such orbits are clearly contained in $P^S$, hence all positive dimensional projective subvarieties are contained in $\Pbundle^S$.
\end{proof}

\begin{example}

Below is a sample hyperplane arrangement corresponding to a complex $4$ dimensional hypertoric variety. There are two circuits of order 2: $(1,2)$ and $(3,4)$, corresponding to $\mathbb{P}^1$ fibrations. The circuits of order 3 are $(1,3,5), (1,4,5), (2,3,5)$ and $(2,4,5)$, and correspond to embedded copies of $\mathbb{P}^2$. Note that each circuit encloses a union of (possibly noncompact) chambers corresponding to the  moment polytope of the corresponding $\mathbb{P}^{|S|-1}$ fibration.

\setlength{\unitlength}{0.8cm}
\begin{picture}(7,7)
\label{example}
\thicklines
\put(3,2){\line(1,0){7}}
\put(3,4){\line(1,0){7}}
\put(10.3,1.9){$H_1$}
\put(10.3,3.9){$H_2$}
\put(11,2){\line(1,0){1}}
\put(11,4){\line(1,0){1}}
\put(4,0){\line(1,1){5}}
\put(9,5.1){$H_5$}
\put(9.5,5.5){\line(1,1){0.5}}
\put(5,0){\line(0,1){5}}
\put(5,5.6){\line(0,1){0.4}}
\put(4.8,5.1){$H_3$}
\put(7,5.6){\line(0,1){0.4}}
\put(7,0){\line(0,1){5}}
\put(6.8,5.1){$H_4$}
\end{picture}
\end{example}

\subsection{Quantum cohomology of $T^*\PP^n$}
By proposition (\ref{codim1structure}) all effective curve classes in $\MM_{\cmom}$ are contained in $\Pbundle^S$. Further, since the latter fibers over an affine base, any curve is actually contained in a fiber. Since the base $\Pbundle^S_0$ is symplectic and the fibers are isotropic, the normal bundle along a fiber is identified with its cotangent bundle. This reduces the computation of Gromov-Witten invariants to the equivariant invariants of $T^*\PP^{|S|-1}$.

This is a special case of the computation for cotangent bundles to Grassmannians, worked out in detail in \cite{QGQC} (note: their $\hbar$ is the negative of ours). In the notation of section \ref{BMO_review}, putting $X=T^*\PP^{|S|-1}$ and incorporating the shift (\ref{canonical_theta_shift}), there is a unique effective primitive curve class, and we have
\[
L_m = \frac{(-1)^{|S|}}{m}[\PP^{|S|-1}\times\PP^{|S|-1}].
\]
We conclude that for a primitive coroot $\beta_S$, on the generic fiber $\MM_{\cmom}$, $\cmom \in K_S$ we have
\[
L_{m\beta_S} = \frac{(-1)^{|S|}}{m}[\Pbundle^S \times_{\Pbundle_0^S} \Pbundle^S].
\]
The correspondence for $\MM_{\cmom=0}$ is obtained by specialization, as in (\ref{specialization}). Plugging this into (\ref{BMO_equation}), the proof of theorem (\ref{ht_qmult_by_divisor}) is concluded.

%
%

\section{Generators and relations for the quantum
cohomology of a hypertoric space}
\label{secquantumrelations}
\noindent
In this section we prove

\begin{reptheorem}{thmquantumrelations}

The relations for quantum cohomology are given by
\begin{equation}
\label{quantumrelations}
\prod^*_{i \in S_+} u_i \prod^*_{j \in S_-} (\hbar - u_i) = q^{\beta_S}\prod^*_{i \in S_+} (\hbar - u_i) \prod^*_{j \in S_-}  u_j,
\end{equation}
where $S$ runs over the set of circuits of the arrangement, and the notation is as in Theorem \ref{classicalrelations}.
\end{reptheorem}
\noindent
We will always decorate quantum products with a star to distinguish them from their drab classical cousins. Note that there is {\em no shift} in the deformation parameter.

We begin with a vanishing lemma:
\begin{lemma}
 \label{vanishing}

Consider a circuit $S$ and a subset $M \subset \arrangement$ such that if $i \in S, i \notin M$, then $M \cup i$ contains no circuits. Choose any splitting $M = M^+ \cup M^-$. Then

\begin{equation}
\label{eqnvanishing}
 L_{S} \left(  \prod_{i \in M^+} u_i \cdot \prod_{i \in M^-} \hbar - u_i \right) = 0.    \end{equation}
\end{lemma}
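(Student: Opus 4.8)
The plan is to exploit the explicit description of $L_S$ coming from Proposition~\ref{codim1structure} and the quantum cohomology computation for $T^*\PP^{|S|-1}$: up to a scalar, $L_S$ is the class of $[\Pbundle^S \times_{\Pbundle^S_0} \Pbundle^S]$ specialized to the central fiber. The operator $L_S(\text{---})$ therefore factors as pushforward along the inclusion $\Pbundle^S \hookrightarrow \MM$ composed with fiberwise integration over the $\PP^{|S|-1}$-bundle $\Pbundle^S \to \Pbundle^S_0$ composed with restriction to $\Pbundle^S$. So the key point is: the restriction to $\Pbundle^S$ of the class $\prod_{i \in M^+} u_i \cdot \prod_{i \in M^-}(\hbar - u_i)$ already vanishes, so a fortiori its image under $L_S$ vanishes.

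First I would recall from the construction of $\Pbundle^S$ that it is cut out by the vanishing of the sections $w_i$ for $i \in S^+$ and $z_i$ for $i \in S^-$; in cohomological terms, the classes $\hbar - u_i$ for $i \in S^+$ and $u_i$ for $i \in S^-$ vanish on $\Pbundle^S$ (they are Euler classes of bundles with nowhere-vanishing sections there). Next I would use the hypothesis on $M$: if some index $i \in S$ lies outside $M$, then $M \cup \{i\}$ contains no circuit, which forces (via the classical relations of Theorem~\ref{classicalrelations}, or directly via linear independence of the $a_i$) that... but actually the cleaner route is the other way: I claim the hypothesis guarantees that $M$ together with $S$ interacts so that the monomial $\prod_{i \in M^+} u_i \prod_{i \in M^-}(\hbar - u_i)$ is divisible, on $\Pbundle^S$, by one of the vanishing classes listed above. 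Concretely, if $M \supseteq S$ then for any splitting of $M$ the factor over some $i \in S$ is either $u_i$ or $\hbar - u_i$; combining this factor over all $i \in S$ with the complementary factors and reordering, the product restricted to $\Pbundle^S$ is a multiple of $\prod_{i\in S^+}(\hbar-u_i)^{\epsilon_i}\prod_{i\in S^-}u_i^{\delta_i}$ up to terms that one checks also vanish — this needs the constraint that $M$'s splitting may not match $S$'s, which is where the case analysis enters. If instead $M \not\supseteq S$, pick $i_0 \in S \setminus M$; then $M \cup \{i_0\}$ has no circuit, and I would argue that this means the classes $\{u_j, \hbar - u_j : j \in M\} \cup \{u_{i_0}\}$ satisfy no relation other than the trivial ones, so that the restriction map $H^\bullet(\MM) \to H^\bullet(\Pbundle^S)$ kills the monomial precisely because on $\Pbundle^S$ the class $u_{i_0}$ (for $i_0 \in S^-$) or $\hbar - u_{i_0}$ (for $i_0\in S^+$) vanishes, and the monomial over $M$ can be rewritten to involve it.

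The honest technical core, and the step I expect to be the main obstacle, is the bookkeeping that turns ``$M \cup \{i\}$ contains no circuit for each $i \in S \setminus M$'' into ``the given monomial restricts to zero on $\Pbundle^S$''. The subtlety is that the splitting $M = M^+ \cup M^-$ is arbitrary and need not be compatible with the canonical splitting $S = S^+ \sqcup S^-$; so for an index $i \in M \cap S$ one might be multiplying by $u_i$ when the class that vanishes on $\Pbundle^S$ is $\hbar - u_i$ (or vice versa). I would handle this by working in $H^\bullet(\Pbundle^S)$ directly: there $\Pbundle^S$ is a $\PP^{|S|-1}$-bundle over the affine $\Pbundle^S_0$, so its cohomology is generated over $H^\bullet(\Pbundle^S_0)$ by the hyperplane class, and one has explicit expressions for the restrictions of all the $u_i$ in terms of that hyperplane class and the (nilpotent-free, since affine) classes pulled back from the base — indeed $H^\bullet(\Pbundle^S_0)$ has its own hypertoric presentation, with relations governed by circuits of the subarrangement indexed by $\arrangement \setminus S$. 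Then ``$M \cup \{i\}$ has no circuit'' translates to an algebraic independence statement in that presentation, and degree/dimension counting (the relevant monomial has top-ish degree while the available relations force it below the fundamental class) gives the vanishing. Once the restriction to $\Pbundle^S$ is shown to vanish, the conclusion $L_S(\cdots) = p_{1*}(L_S \cap p_2^*(\cdots)) = 0$ is immediate since $p_2^*$ of the class already dies on the support of $L_S$.
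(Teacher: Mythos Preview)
Your central claim --- that the monomial $\prod_{i\in M^+} u_i \prod_{i\in M^-}(\hbar-u_i)$ restricts to zero on $\Pbundle^S$ --- is false, and this is where the argument breaks. Take $M=\emptyset$ (which certainly satisfies the hypothesis, since a singleton $\{i\}$ is never a circuit): the monomial is $1$, whose restriction to $\Pbundle^S$ is $1\neq 0$. More generally, whenever $M\cap S=\emptyset$ the factors $u_i$ or $\hbar-u_i$ are pulled back from the affine base $\Pbundle^S_0$ and have no reason to vanish on $\Pbundle^S$. Your attempts to ``rewrite the monomial to involve'' a class that vanishes on $\Pbundle^S$ cannot succeed here: there is simply no such factor to produce. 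What actually kills $L_S(1)$ is not restriction but the fiber integration step over $\PP^{|S|-1}$, which you mention but then set aside. Even pursuing that, the delicate case $|S\setminus M|=1$ would require you to track exactly how the hypothesis ``$M\cup\{i_0\}$ has no circuit'' constrains the class on the base $\Pbundle^S_0$; your sketch does not do this, and the arbitrary splitting of $M$ makes the bookkeeping you describe genuinely unpleasant. A further issue: $\Pbundle^S$ lives in the subregular fiber $\MM_\lambda$, while the lemma is about $L_S$ acting on $H^\bullet_{\FT}(\MM)$ at $\lambda=0$; you need to argue via specialization, which you do not address.

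The paper's proof avoids all of this by working not with $\Pbundle^S$ but with the $T^d$-moment map $\mu_d:\MM\to(\lt^d)^*$. The point is that every Steinberg correspondence acts fiberwise over $(\lt^d)^*$, and $L_S$ is supported over the linear subspace $F_S=\bigcap_{i\in S}\{a_i=0\}$. The monomial $u_M$ is represented by a cycle supported over $F_M=\bigcap_{i\in M}\{a_i=0\}$, and since $M$ contains no circuit this has codimension exactly $|M|$. The hypothesis on $M$ translates cleanly into the linear-algebra statement $\mathrm{Span}(a_i)_{i\in S}\not\subset\mathrm{Span}(a_i)_{i\in M}$, hence $\mathrm{codim}(F_S\cap F_M)>|M|$. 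Since $L_S$ is degree-preserving and acts fiberwise, $L_S(u_M)$ is a class of degree $|M|$ supported over a subvariety of codimension strictly greater than $|M|$, so it vanishes. This argument is insensitive to the splitting $M=M^+\cup M^-$ and requires no case analysis.
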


\begin{proof}
Let $\mu_d : \MM \to (\lt^d)^*$ be the moment map for the action of $T^d$ on $\MM$. Recall the embedding $a^*:(\lt^d)^*\to(\lt^n)^*$.
Each $i \in \arrangement$ thus determines a hyperplane $F_i$ through the origin of $(\lt^d)^*$ by restricting the corresponding linear form. Since $\mu$ maps to an affine space, Steinberg correspondences act fiberwise: in fact, the correspondence $L_S$ is supported over the intersection $F_S = \cap_{i \in S} F_i$.

Let $u_M$ be the argument of $L_S$ in (\ref{eqnvanishing}). It is naturally represented by a cycle supported above $F_M$. Since $M$ contains no circuits, $F_M$ has codimension $|M|$ in $(\lt^d)^*$. Suppose  $codim(F_S \cap F_M) = codim(F_M) = |M|$. Then $\text{Span}(a_i)_{i \in S} \subset \text{Span}(a_i)_{i \in M}$. Hence given any $i \in S$, $i \cup M$ contains a circuit, contradicting our hypothesis. It follows that $codim(F_S \cap F_M) > |M|$. Since $L_S$ acts fiberwise, $L_S(u_M)$ is supported above $F_S \cap F_M$. Since $L_S$ is degree preserving and $u_M$ has degree $|M|$, $L_S(u_M)=0$.
\end{proof}

\begin{proof}[Proof of {\ref{thmquantumrelations}}]
We claim that in the products on either side of (\ref{quantumrelations}), only the last factor can carry a quantum modification. More precisely, let

\begin{equation*}
 v_i = \begin{cases}
    u_i, & \text{ if $i \in S_+$}.\\
    \hbar - u_i, & \text{ if $i \in S_-$}.
  \end{cases}
\end{equation*}
We have $(v_i, \beta_S) = 1$. Choosing $i_0 \in S$, thorem \ref{ht_qmult_by_divisor} applied to one of the $v_i$-s and lemma (\ref{vanishing}) imply
\begin{equation}  \label{quantumrelationslemma}
\prod^*_{i \in S, i \neq i_0} v_i = \prod_{i \in S, i \neq i_0} v_i \end{equation}
and
\begin{equation}  \label{quantumrelationslemma}
v_{i_0} * \prod_{i \in S, i \neq i_0} v_i = \prod_{i \in S} v_i +  \frac{\hbar q^S}{1-q^S}L_S\left( \prod_{i \in S, i \neq i_0} v_i \right)
\end{equation}
and likewise for $\hbar - v_i$. To see this, note that the factor $(u_i, \beta_S)$ in Equation \ref{ht_qmult_by_divisor} vanishes unless $i \in S$. Thus Lemma \ref{vanishing} applies to all quantum corrections except the one appearing in (\ref{quantumrelationslemma}). Using the classical relations, we can therefore rewrite the relation (\ref{quantumrelations}) as
\begin{align}
\label{toprove}
 \frac{\hbar q^S}{1-q^S} L_S \left( \prod_{i \in S i \neq i_0} v_i \right) =  (-1)^{|S|}q^S  \left( \prod_{i \in S} (\hbar - v_i) -   \frac{\hbar q^S}{1-q^S}  L_S  \left( \prod_{i \in S, i \neq i_0} \hbar - v_i \right) \right).
\end{align}

We begin by showing
\begin{lemma}
\label{steinbergpoly}
\begin{equation}
\hbar L_S \left( \prod_{i \in S, i \neq i_0} v_i \right) = (-1)^{|S|}\prod_{i \in S} \hbar - v_i.
\end{equation}
\end{lemma}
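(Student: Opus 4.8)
\textbf{Proof proposal for Lemma \ref{steinbergpoly}.}

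The plan is to reduce the identity to an explicit computation on the projective bundle $\Pbundle^S \to \Pbundle^S_0$, where we already know from Section \ref{secdivisormult} that the correspondence $L_S$ equals (up to sign) the fiberwise diagonal correspondence $[\Pbundle^S \times_{\Pbundle_0^S} \Pbundle^S]$. Concretely, after specialization this means $L_S(\gamma) = \pm \, j_* \pi^* \pi_* j^* \gamma$, where $j \colon \Pbundle^S \hookrightarrow \MM$ is the inclusion and $\pi \colon \Pbundle^S \to \Pbundle^S_0$ is the bundle projection; the key point is that $\pi_*$ integrates over the $\PP^{|S|-1}$ fiber. So the first step is to restrict the class $\prod_{i \in S, i \neq i_0} v_i$ to $\Pbundle^S$ and identify how the $v_i$ behave there: on the fiber $\PP^{|S|-1}$, the hyperplane class is represented by each of the $v_i$ with $i \in S$ (this is exactly the content of Proposition \ref{codim1structure}, where the fiber is the projectivization of the $\C^{|S|}$ with coordinates dictated by $S^+, S^-$), so each $v_i|_{\Pbundle^S}$ agrees with a fixed class $h$ modulo $\pi^*(\text{something on } \Pbundle^S_0)$.

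The second step is the fiber integral itself. Having written $v_i|_{\Pbundle^S} = h + \pi^*c_i$ for classes $c_i$ pulled back from the base, I compute $\pi_* \left( \prod_{i \in S, i \neq i_0}(h + \pi^* c_i) \right)$ using the projection formula and the standard fact that $\pi_* h^{|S|-1} = 1$, $\pi_* h^j = 0$ for $j < |S|-1$; this is a manifestly $|S|-1$-fold product of linear forms in $h$, so its pushforward is the degree-$0$ symmetric function, i.e. just the coefficient, which is $1$. Thus $\pi_* j^* \left( \prod_{i \in S, i \neq i_0} v_i \right) = 1 \in H^\bullet(\Pbundle^S_0)$, and pulling back and pushing forward to $\MM$ gives $j_* \pi^* (1) = j_*[\Pbundle^S] = [\Pbundle^S]$ as a class in $\MM$. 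The extra factor of $\hbar$ on the left-hand side then has to be absorbed: here one uses that $[\Pbundle^S]$ in $H^\bullet_{\FT}(\MM)$ is, by the classical presentation of Theorem \ref{classicalrelations}, equal to $\prod_{i\in S^+} u_i \prod_{i\in S^-}(\hbar - u_i) \big/ \hbar = \prod_{i \in S} v_i / \hbar$ up to the relevant normalization — more precisely, the self-intersection/excess bundle of $\Pbundle^S$ contributes exactly the missing factor, which is where the $\hbar$ and the sign $(-1)^{|S|}$ enter, since the normal bundle of a fiber is its cotangent bundle (weight $\hbar$) as noted before Proposition \ref{codim1structure}.

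The main obstacle I anticipate is bookkeeping the signs and the single factor of $\hbar$ correctly: one must carefully match the normalization of $L_S$ coming from the $T^*\PP^{|S|-1}$ computation (where $L_1 = (-1)^{|S|}[\PP^{|S|-1}\times\PP^{|S|-1}]$ after the canonical theta shift) against the class $\prod_{i\in S}(\hbar - v_i)$, and check that restricting to the central fiber $\MM_{\cmom=0}$ via Proposition \ref{specialization} does not introduce spurious corrections. A clean way to organize this is to verify the identity first in the equivariant cohomology of the model space $T^*\PP^{|S|-1}$ — where both sides are explicit polynomials in the equivariant parameters and $\hbar$ — and then transport it to $\MM$ using the fact that $L_S$ and all the $v_i$ are compatible with the inclusion of $\Pbundle^S$ and its normal geometry. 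Once the model case is settled, the general case is a formal consequence of the fiberwise nature of $L_S$ established in Lemma \ref{vanishing} and the structure result Proposition \ref{codim1structure}.
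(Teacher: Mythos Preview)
Your outline is close in spirit to the paper's argument, but there are two concrete gaps that prevent it from going through as written.

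First, a slip: the class you want for $[\Pbundle^S]$ is $\prod_{i\in S}(\hbar - v_i)$, not $\prod_{i\in S} v_i$. Indeed $\Pbundle^S$ is cut out by $\{w_i=0:i\in S^+\}\cup\{z_i=0:i\in S^-\}$, whose divisor classes are $\hbar-u_i$ and $u_i$ respectively, giving $\prod_{i\in S^+}(\hbar-u_i)\prod_{i\in S^-}u_i=\prod_{i\in S}(\hbar-v_i)$. The product $\prod_{i\in S}v_i$ is \emph{zero} in $H^\bullet_{\FT}(\MM)$ by Theorem \ref{classicalrelations}, so your formula $[\Pbundle^S]=\prod v_i/\hbar$ cannot be right on the central fiber.

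Second, and more seriously, you place $\Pbundle^S$ inside $\MM=\MM_{\theta,0}$ and compute $L_S$ there as $j_*\pi^*\pi_*j^*$. But $\Pbundle^S$ is constructed in Proposition \ref{codim1structure} as a subvariety of the \emph{subregular} fiber $\MM_\lambda$ for generic $\lambda\in K_S$; on the central fiber there is no such clean $\PP^{|S|-1}$-bundle over an affine base, and the specialized correspondence $L_S=\mathrm{Spec}(L_\lambda)$ need not be of the form $j_*\pi^*\pi_*j^*$ for any $j,\pi$ on $\MM$. The paper handles this by working in the one-parameter family $\widetilde{\MM}_S$ over a line $V\subset K_S$: on $\widetilde{\MM}_S^\circ$ one has both $L_\lambda(\prod_{i\neq i_0}v_i)=(-1)^{|S|}[\Pbundle^S]$ (by transversality, essentially your fiber-integration step) and $\hbar[\widetilde{\Pbundle}^S]=\prod_{i\in S}(\hbar-\tilde v_i)$. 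Specializing to $\lambda=0$ then gives the desired identity \emph{only up to a class divisible by $\hbar$}, and a separate argument is needed to kill this correction: it is supported over $\mu_d^{-1}(F_S)$, which has codimension $|S|-1$, while the class has degree $|S|-1$ and is divisible by $\hbar$, hence vanishes. Your proposal flags specialization as a worry but does not supply this last step, which is where the actual content lies.
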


\begin{proof}
Choose a generic line $V \subset \roothyp_{\beta}$ through the origin. Let $\cmom \in V \setminus 0$. Since all relevant intersections are transverse, it follows from the construction of $\Pbundle^S$ and the definition of $v_i$ that in the $T^d$ equivariant cohomology of $\MM_{\cmom}$ we have
\[
L_{\cmom} \left( \prod_{i \in S, i \neq i_0} v_i \right) = (-1)^{|S|}[\Pbundle^S].
\]
Denote the restriction of the family (\ref{deformation_diagram}) to $V$ by $\widetilde{\MM}_{S}$. The total space of $\widetilde{\MM}_{S}$ carries a fiberwise action of $T^d$; we denote the $\FT$-invariant submanifold $\widetilde{\MM}_{S} \setminus\MM$ by $\widetilde{\MM}_{S}^{\circ}$.  Let $\tilde{v_i}$ and $\widetilde{\Pbundle}^S$ be the natural extensions to $\widetilde{\MM}_{S}$. In the $\FT$ equivariant cohomology of $\widetilde{\MM}_{S}^{\circ}$ we have
\[ \hbar [\widetilde{\Pbundle}^S] = \prod_{i \in S} \hbar - \tilde{v}_i. \]
$L_S$ similarly extends over $\widetilde{\MM}_{S}$, and specializing to the central fiber we obtain that the equation
\[ L_{S}  \left( \prod_{i \in S, i \neq i_0} v_i \right) = (-1)^{|S|}\frac{1}{\hbar} \prod_{i \in S} \hbar - v_i. \]
holds up to a class divisible by $\hbar$. But in the notation of \ref{vanishing}, such a class must be supported on $\mu_d^{-1}(F_S)$, which is a subvariety of codimension $|S|-1$. Since the class has degree $|S|-1$ and is divisible by $\hbar$, it must vanish.
\end{proof}
Essentially the same proof shows

\begin{lemma}
\label{steinbergpoly2}
\begin{equation}
\hbar L_S \left( \prod_{i \in S, i \neq i_0} \hbar - v_i \right) = - \prod_{i \in S} \hbar - v_i.
\end{equation}
\end{lemma}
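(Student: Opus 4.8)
The plan is to mimic the proof of Lemma \ref{steinbergpoly} almost verbatim, the only change being which polynomial appears as the argument of $L_S$. First I would choose the same generic line $V \subset \roothyp_{\beta}$ through the origin and a point $\cmom \in V \setminus 0$, and compute $L_{\cmom}$ applied to $\prod_{i \in S, i \neq i_0} (\hbar - v_i)$ inside the $T^d$-equivariant cohomology of $\MM_{\cmom}$. Here the key observation is that $(\hbar - v_i, \beta_S) = -1$ for $i \in S$ (since $(v_i, \beta_S) = 1$), so the product $\prod_{i \in S, i \neq i_0}(\hbar - v_i)$, restricted to $\Pbundle^S$, pulls back from the base along with a sign twist by the fiber hyperplane class: one gets $L_{\cmom}\left(\prod_{i \in S, i \neq i_0}(\hbar - v_i)\right) = (-1)^{|S|-1}[\Pbundle^S] = -(-1)^{|S|}[\Pbundle^S]$, the extra minus sign compared to Lemma \ref{steinbergpoly} coming from the $|S|-1$ sign flips. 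This is the one place where the computation genuinely differs, so I would spell out that the transversality of the relevant intersections in $\Pbundle^S$ lets each factor $\hbar - v_i$ restrict to (negative of) the fiberwise hyperplane class, and collecting signs gives the stated coefficient.

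Next I would run the same extension argument: pass to the family $\widetilde{\MM}_S$ over $V$, its $\FT$-invariant open part $\widetilde{\MM}_S^{\circ} = \widetilde{\MM}_S \setminus \MM$, the natural extensions $\tilde{v}_i$ and $\widetilde{\Pbundle}^S$, and use the relation $\hbar [\widetilde{\Pbundle}^S] = \prod_{i \in S}(\hbar - \tilde{v}_i)$ in $\FT$-equivariant cohomology of $\widetilde{\MM}_S^{\circ}$. Since $L_S$ extends over the family, specializing to the central fiber yields $\hbar L_S\left(\prod_{i \in S, i \neq i_0}(\hbar - v_i)\right) = -\prod_{i \in S}(\hbar - v_i)$ up to a class divisible by $\hbar$. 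Finally, the same degree/support dichotomy as before closes the gap: any correction term is supported on $\mu_d^{-1}(F_S)$, which has codimension $|S|-1$, so a class of degree $|S|-1$ that is divisible by $\hbar$ must vanish. This gives the claimed identity.

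The main obstacle — really the only non-bookkeeping point — is getting the sign right in the first step, i.e. verifying that replacing each $v_i$ by $\hbar - v_i$ flips the restriction to the fiber class with a sign and produces exactly the factor $(-1)^{|S|-1}$ rather than $(-1)^{|S|}$. Everything downstream (the extension over $V$, the specialization, the codimension-versus-degree vanishing) is formally identical to Lemma \ref{steinbergpoly} and can be invoked rather than repeated; indeed the excerpt already signals this by saying ``essentially the same proof.'' So I would keep the write-up short: state the modified fiberwise computation with its sign, then say the remainder is identical to the proof of Lemma \ref{steinbergpoly}.
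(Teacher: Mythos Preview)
Your overall strategy is exactly what the paper intends by ``essentially the same proof'': compute $L_{\cmom}$ on the generic fiber, use $\hbar[\widetilde{\Pbundle}^S]=\prod_{i\in S}(\hbar-\tilde v_i)$ on $\widetilde{\MM}_S^{\circ}$, specialize, and kill the ambiguity by the codimension/degree argument. That part is fine and needs no change.

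However, your sign bookkeeping in the first step is off, and since you flag this as the one genuine point, it matters. Recall that $L_{\cmom}$ already carries the factor $(-1)^{|S|}$ from the $T^*\PP^{|S|-1}$ computation, so that in Lemma~\ref{steinbergpoly} one has $L_{\cmom}(\prod_{i\neq i_0}v_i)=(-1)^{|S|}[\Pbundle^S]$ because the bare correspondence $[\Pbundle^S\times_{\Pbundle_0^S}\Pbundle^S]$ sends $\prod_{i\neq i_0}v_i$ to $[\Pbundle^S]$ (fiber integral of $H^{|S|-1}$ over $\PP^{|S|-1}$ equals $1$). Replacing each $v_i$ by $\hbar-v_i$ changes the fiber restriction from $H+\text{const}$ to $-H+\text{const}$, so the fiber integral becomes $(-1)^{|S|-1}$, and hence
\[
L_{\cmom}\Bigl(\prod_{i\neq i_0}(\hbar-v_i)\Bigr)=(-1)^{|S|}\cdot(-1)^{|S|-1}[\Pbundle^S]=-[\Pbundle^S],
\]
not $(-1)^{|S|-1}[\Pbundle^S]$ as you wrote. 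Your formula and the correct one disagree whenever $|S|$ is odd, and with your intermediate value the specialization step would produce $-(-1)^{|S|}\prod(\hbar-v_i)$ rather than $-\prod(\hbar-v_i)$. With the corrected sign, the rest of your argument goes through verbatim and matches the paper.
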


The combination of (\ref{steinbergpoly}) and (\ref{steinbergpoly2}) proves (\ref{toprove}). We must now show that this generates all the relations. A basis for $H_{\FT}^{\bullet}(\MM,\C)$ is given by monomials in $u_i$ containing no circuits, with coefficients in $\C[\hbar]$ (where a monomial is defined using the classical product). One can use our quantum relations to write any quantum monomial in terms of monomials without circuits, hence the dimension of the algebra defined by our relations is no greater than that of $H_{\FT}^{\bullet}(\MM,\C)$. It follows that the dimensions must be equal. This concludes the proof of Theorem \ref{thmquantumrelations}.
\end{proof}

%
%

\section{Mirror symmetry for hypertoric spaces}
\label{secmirror}
\noindent
In this section we give a mirror formula for the quantum connection of $\MM$.

\subsection{Quantum Connection}

We view $\hat{\rmom} \in H^2_{T^d}(\MM, \mathbb{C}) = (\lt^n)^*$ as a $T^d$-equivariant complexified K\"ahler class.

\begin{definition}
Let $E$ be the trivial bundle with base $(T^n)^{\vee} = \text{Exp}(\lt^n)^*$ and fiber $H^{\bullet}_{\FT}(\MM, \mathbb{C})$. The basis $e_i$ of $\lt^n_{\mathbb{Z}}$ defines coordinates $q_i = e^{2\pi i (e_i, \hat{\rmom})}$ on $(T^n)^{\vee}$. The \emph{quantum connection} is the \scare{connection} on $E$ defined by

\begin{align*}
\nabla_{i}        & = q_i\frac{\partial}{\partial q_i} + u_i  *                               \\
\end{align*}

\end{definition}
\noindent
Here $*$ is the quantum product evaluated at $q$. Due to the presence of equivariant parameters, this is not a true connection, hence the scare quotes; but it restricts to one along any slice obtained by fixing equivariant parameters. We see from (\ref{ht_qmult_by_divisor}) that the connection is singular exactly along $e^{\Delta} := \text{Exp}(\Delta) \subset (T^n)^{\vee}$, where $\Delta$ is the discriminant locus described in (\ref{secdisclocus}), or rather its preimage in $(\lt^n)^*$ under $\iota^*$.

\subsection{Mirror formula}

Consider the torus
\[
(T^d)^{\vee} = \mathrm{Exp}(\lt^d)^*
\]
Choose generators $b_j$ of $(\lt^d)^*_{\mathbb{Z}}$ and let $t_j = e^{2 \pi i b_j}$ be coordinates on $(T^d)^{\vee}$. Given $q$ such that
\[ q \in (T^n)^{\vee} \setminus e^{\Delta}, \]
define complex multiplicative analogues of the hyperplanes from (\ref{linearhyp}) by
\begin{equation}
\label{multhyps}
\mathcal{H}_i = \{ t \in (T^d)^{\vee} \text{ s.t. } q_i t^{ a_i} = -1 \}
\end{equation}
and define the mirror family
\[ \MRR_q =  (T^d)^{\vee} \setminus \{ \mathcal{H}_i \}_{i \in \arrangement}. \]

Let $T^d$ have equivariant parameters $\eqpaj$ dual to the basis $b_j$, and recall that $\DT$ has parameter $\hbar$. Define a local system $\Loc$ on $\MRR_{q}$ with monodromy $\hbar$ around the hyperplanes $\mathcal{H}_i$ and $-\eqpaj$ around $t_j = 0$. The space $H_{d}(\MRR_q, \Loc)$ is spanned over $\mathbb{C}$ by the lattice of integral cycles, and dually $H^{d}(\MRR_q, \Loc)$ is spanned by a lattice of integral classes. Hence a homotopy class of paths from $q_1$ to $q_2$ avoiding $e^{\Delta}$  yields an identification $H^{d}(\MRR_{q_1}, \Loc)$ with $H^{d}(\MRR_{q_2}, \Loc)$; this is called the Gauss-Manin connection.

\begin{reptheorem}{thmmirror}
For generic $\hbar$ and $\aaa$, there is an isomorphism
\[
H^{d}(\MRR_q, \Loc) \to H^{\bullet}_{\FT}(\MM, \mathbb{C}) \otimes \mathbb{C}_{\hbar, \aaa}
\]
taking the Gauss-Manin connection to the quantum connection, where $\mathbb{C}_{\hbar, \eqpaj}$ is the one dimensional $H^{\bullet}_{\FT}(pt)$ module with parameters $\hbar, \eqpaj$.
\end{reptheorem}
\noindent
We can reformulate Theorem \ref{thmmirror} in terms of a certain differential equation.

\begin{definition}
Let $[\MM]$ be the fundamental class viewed as a constant section of $E$. We define the \emph{quantum differential equation} or \emph{QDE} as the set of differential relations $P$ satisfied by $[\MM]$:

\[ P(\nabla_i, q_i) [\MM] = 0  \]
\end{definition}
Since the quantum cohomology of $\MM$ is generated by divisors, it is easy to see that knowing the QDE is equivalent to knowing the quantum connection. Now define $\Omega \in H^{d}(\MRR_q, \Loc)$ by

\begin{equation}
\Omega =   \prod_{i \in \arrangement} (1 + q_i t^{a_i})^{\hbar} \prod_{j = 1}^d t_j^{-\eqpaj} \frac{d t_j}{t_j}
\label{eulerform}
\end{equation}
Choosing $\gamma\in H_{d}(\MRR_q, \Loc)$ and identifying the homology of nearby fibers using the Gauss-Manin connection, we see that the period
\begin{equation}
J_{\gamma}(q) = \int_{\gamma \subset \MRR_{q}} \Omega,
 \label{Euler integral} \end{equation}
is a multivalued function of $q$.

\begin{theorem}
\label{thmmirror2}
For generic equivariant parameters $\aaa$ and $\ddd$, the periods (\ref{Euler integral}) form a full set of solutions to the quantum differential equation.
\end{theorem}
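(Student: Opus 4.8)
The plan is to deduce Theorem \ref{thmmirror2} directly from Theorem \ref{thmmirror}, using the dictionary between the quantum connection and the Gauss--Manin connection together with the fact that the period $J_\gamma(q)$ computes the pairing of the cycle $\gamma$ against the flat section $\Omega$. First I would observe that the isomorphism of Theorem \ref{thmmirror} sends the fundamental class $[\MM]$, viewed as a constant section of $E$, to a distinguished class in $H^d(\MRR_q,\Loc)$; the natural candidate is the class $[\Omega]$ of the Euler-type form \eqref{eulerform}, since $\Omega$ is manifestly a \emph{flat} section of the Gauss--Manin system (its dependence on $q$ is through the monodromy-twisted integrand only, and differentiating under the integral sign produces the Gauss--Manin covariant derivative). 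So the first step is to check that $[\MM]$ and $[\Omega]$ correspond under the identification of Theorem \ref{thmmirror}: it suffices to verify this at one base point $q$ in a limiting regime (e.g. $q\to 0$, the large-volume / classical limit), where $\MRR_q$ degenerates to a complement of linear hyperplanes, $H^d(\MRR_q,\Loc)$ becomes the Orlik--Solomon-type description of $H^\bullet_{\FT}(\MM)$ appearing in Theorem \ref{classicalrelations}/\cite{HP04}, and $\Omega$ reduces to the generator dual to $[\MM]$.

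Granting that, the argument is formal: by definition of the QDE, a function $f(q)$ is a solution iff it is the pairing $\langle s(q), [\MM]\rangle$ for some flat section $s$ of the (dual) quantum connection. Under the isomorphism of Theorem \ref{thmmirror}, flat sections of the dual quantum connection correspond to flat sections of the dual Gauss--Manin connection, i.e. to locally constant families of cycles $\gamma \in H_d(\MRR_q,\Loc)$, and the pairing $\langle s(q),[\MM]\rangle$ becomes exactly $\int_\gamma \Omega = J_\gamma(q)$. Thus every $J_\gamma$ solves the QDE. To see that they form a \emph{full} set, I would count dimensions: $H_d(\MRR_q,\Loc)$ has rank equal to $\dim_{\C} H^\bullet_{\FT}(\MM,\C)$ — this is again visible in the classical limit, or by a standard Euler-characteristic computation for twisted homology of hyperplane-arrangement complements (for generic $\hbar, \eqpaj$ the twisted homology is concentrated in the middle degree and has dimension $|\chi((T^d)^\vee\setminus\bigcup\mathcal H_i)|$, which matches the number of bounded-or-relevant chambers, hence $\dim H^\bullet_{\FT}(\MM)$). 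Since the QDE is a holonomic system of rank $\dim H^\bullet_{\FT}(\MM,\C)$ (its solution space has exactly that dimension because the quantum connection is a connection on a bundle of that rank along each equivariant slice), a set of $\dim H^\bullet_{\FT}(\MM,\C)$ solutions that is linearly independent is automatically a full set; linear independence of the $J_\gamma$ for $\gamma$ running over a basis of $H_d(\MRR_q,\Loc)$ follows from nondegeneracy of the period pairing between twisted homology and twisted cohomology, valid for generic $\hbar,\eqpaj$.

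The main obstacle I expect is the first step — pinning down that $[\Omega]$ is precisely the image of $[\MM]$ under Theorem \ref{thmmirror}, with the correct normalization and no spurious factor. Theorem \ref{thmmirror} as stated identifies the two \emph{connections} and the underlying bundles, but extracting from it that a specific explicit cohomology class on the mirror side matches the fundamental class requires unwinding how that isomorphism was constructed; the cleanest route is probably to characterize $[\MM]$ intrinsically (it is the cyclic vector generating the quantum $D$-module, equivalently the unique-up-to-scale class annihilated by $\nabla_i - q_i\partial_{q_i}$ at $q=0$ after the classical identification) and then check $\Omega$ has the same characterization on the Gauss--Manin side — which again reduces to the $q\to 0$ asymptotics of the Euler integral, a saddle-point / Mellin–Barnes estimate that one must carry out carefully but which is routine for integrals of this shape. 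A secondary technical point is making the ``generic $\hbar,\eqpaj$'' hypothesis do exactly the work needed in both the concentration-in-middle-degree statement and the nondegeneracy of the period pairing; both are standard for arrangement complements but should be cited or spelled out.
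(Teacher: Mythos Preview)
Your proposal has a genuine structural gap: it is circular relative to the paper. In the paper, Theorem \ref{thmmirror2} is proved \emph{first}, by direct computation, and Theorem \ref{thmmirror} is then deduced from it (the paper states explicitly: ``We begin by proving (\ref{thmmirror2}), from which we deduce (\ref{thmmirror})'', and later ``Theorem \ref{thmmirror} follows by identifying $P(\mE_i,q)\Omega$ and $P(\nabla_i,q)[\MM]$ for all polynomials $P$''). So invoking Theorem \ref{thmmirror} as an input to prove Theorem \ref{thmmirror2} assumes what you are trying to establish. Unless you supply an independent proof of Theorem \ref{thmmirror} --- which your proposal does not attempt --- the argument does not stand on its own.

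The paper's actual proof of Theorem \ref{thmmirror2} is more direct and avoids this issue entirely. It first writes down the QDE explicitly as a GKZ hypergeometric system (Proposition \ref{propQDE}), then verifies by hand --- via integration by parts for the linear relations and a short direct calculation for the circuit relations --- that the form $\Omega$ satisfies the same differential operators with $\nabla_i$ replaced by $\mE_i = q_i\partial_{q_i}$. This shows every period $J_\gamma$ solves the QDE. Fullness is then obtained not by a dimension count on the mirror side as you suggest, but by appealing to the non-resonance criterion for GKZ systems from \cite{gelfand1990generalized}: the appendix checks that for generic $(\hbar,\eqpaj)$ the GKZ parameter is non-resonant, which guarantees the periods span the solution space.

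A secondary issue: your claim that $\Omega$ is ``manifestly a flat section of the Gauss--Manin system'' is not correct. The class $\Omega$ depends on $q$ through the factors $(1+q_it^{a_i})^{\hbar}$ and is \emph{not} Gauss--Manin flat; rather, it plays the role of the cyclic vector --- exactly parallel to $[\MM]$ on the quantum side, which is likewise not $\nabla$-flat. You do eventually say this correctly later in the proposal, but the earlier mischaracterization suggests the mechanism by which $J_\gamma$ solves the QDE was not pinned down: the flat objects are the cycles $\gamma$, and the differential relations come from relations satisfied by $\Omega$ as a cohomology class, which is precisely what the paper computes.
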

We begin by proving (\ref{thmmirror2}), from which we deduce (\ref{thmmirror}).

\subsection{QDE of a hypertoric space}
Write $a_{ij}$ for the coordinates of $a_i$ in the basis $b_j$.
\begin{proposition}
\label{propQDE}
The QDE of $\MM$ contains the following differential relations:
\begin{align*}
 & \text{For all } 1 \leq j \leq d : \ \ \ \ \  \sum_{i=1}^n a_{ij} \nabla_{u_i}  = \eqpaj \\
 & \text{For all circuits } S: \\
 & \left( \prod_{i \in S^+} \nabla_{u_i} \prod_{j \in S^-} (\hbar - \nabla_{u_i}) - q^{\beta_S} \prod_{i \in S^+} (\hbar -  \nabla_{u_i}) \prod_{j \in S^-}   \nabla_{u_i} \right) [\MM] = 0
\end{align*}
\end{proposition}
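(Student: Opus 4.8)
The strategy is to pull back the differential relations from the quantum connection on $E$ to statements about the periods $J_\gamma$, using that $[\MM]$ is the fundamental class and that the quantum relations of Theorem~\ref{thmquantumrelations} generate the ideal. Concretely, the quantum connection is defined so that $\nabla_i = q_i\partial_{q_i} + u_i*$, and acting with $\nabla_i$ on the constant section $[\MM]$ simply produces $u_i*[\MM] = u_i$ (since the classical cap product with the fundamental class and the quantum corrections all land in $H^\bullet_{\FT}(\MM)$, and $u_i$ acts by multiplication). Iterating, any polynomial $P(\nabla_1,\dots,\nabla_n)$ applied to $[\MM]$ evaluates to $P_*(u_1,\dots,u_n)$ where $P_*$ denotes the quantum product of the corresponding classes. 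Hence $P(\nabla_i,q_i)[\MM]=0$ if and only if $P_*(u_1,\dots,u_n)=0$ in $QH^\bullet_{\FT}(\MM)$. The linear relations $\sum_i a_{ij}u_i = \eqpaj$ hold already classically (Theorem~\ref{classicalrelations}, last displayed formula), and since these are linear they are unaffected by quantum corrections, giving the first family of relations. The circuit relations are exactly~(\ref{eqnquantumrelations}), so they give the second family. This establishes Proposition~\ref{propQDE} once one checks the bookkeeping of how $q_i\partial_{q_i}$ interacts with the $q$-dependence of the quantum product.

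\textbf{Key steps, in order.} First I would record the basic computation $\nabla_i[\MM] = u_i$ as a section of $E$, being careful that the derivative term $q_i\partial_{q_i}$ annihilates the constant section $[\MM]$. Second, I would prove the compatibility statement that for any noncommutative polynomial $P$ in the $\nabla_i$, $P(\nabla_i,q_i)[\MM]$ equals the corresponding ordered quantum product of the $u_i$; this is where one uses that the $\nabla_i$ commute (flatness of the quantum connection, standard for quantum cohomology / Dubrovin connection), so the ordering ambiguity in $\prod_{i\in S^+}\nabla_{u_i}\prod_{j\in S^-}(\hbar-\nabla_{u_i})$ is immaterial and matches the commutative product on the cohomology side. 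Third, I would quote Theorem~\ref{thmquantumrelations} to see that the circuit expressions~(\ref{eqnquantumrelations}) vanish in $QH^\bullet_{\FT}(\MM)$, hence the associated differential operators kill $[\MM]$. Fourth, for the linear relations I would note they are the specialization relations from Section~\ref{hypertoric_intro} and are purely classical, so they lift verbatim to the quantum connection. Assembling these gives the proposition.

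\textbf{Main obstacle.} The only genuinely delicate point is the second step: justifying that applying a product of the operators $\nabla_{u_i}$ to $[\MM]$ reproduces the quantum product $u_{i_1}*\cdots*u_{i_m}$ rather than something with extra derivative-of-$q$ corrections. One has $\nabla_{u_i}(u_{i_2}*\cdots) = q_i\partial_{q_i}(u_{i_2}*\cdots) + u_i*(u_{i_2}*\cdots)$, and the derivative term is nonzero because the quantum product depends on $q$; what rescues the argument is the defining property of the Dubrovin/quantum connection, namely that $\nabla_i$ is a flat connection whose action on cohomology classes viewed as sections is exactly quantum multiplication followed by the trivial connection, so that $\nabla_{i_1}\cdots\nabla_{i_m}[\MM]$ telescopes to $u_{i_1}*\cdots*u_{i_m}*[\MM]$. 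I would spell this out via the well-known identity that for the quantum connection $\nabla_i(\gamma) = q_i\partial_{q_i}\gamma + u_i*\gamma$, one has $\nabla_i\nabla_j[\MM] = u_i*u_j*[\MM]$ because $q_i\partial_{q_i}(u_j*[\MM]) + u_i*(u_j*[\MM])$ combined with the symmetry $q_i\partial_{q_i}(u_j*) = q_j\partial_{q_j}(u_i*)$ (a consequence of $\nabla$ being torsion-free and flat, equivalently of the WDVV equations) collapses correctly; I would then induct. Once this compatibility is in hand, Proposition~\ref{propQDE} is an immediate translation of Theorems~\ref{classicalrelations} and~\ref{thmquantumrelations}.
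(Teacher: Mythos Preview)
Your second step contains a genuine gap. The identity $\nabla_{i_1}\cdots\nabla_{i_m}[\MM]=u_{i_1}*\cdots*u_{i_m}$ is \emph{false} in general for $m\ge 3$, and flatness of the Dubrovin connection does not rescue it. You correctly note that $\nabla_j[\MM]=u_j$ and, since $u_j$ is a constant section, $\nabla_i\nabla_j[\MM]=u_i*u_j$. But $u_i*u_j$ is no longer a constant section: it carries the $q$-dependence of the quantum corrections. Hence
\[
\nabla_k(u_i*u_j)=q_k\partial_{q_k}(u_i*u_j)+u_k*(u_i*u_j),
\]
and the first term is typically nonzero. Flatness only gives $[\nabla_i,\nabla_j]=0$; the symmetry $q_i\partial_{q_i}(u_j*\,\text{---})=q_j\partial_{q_j}(u_i*\,\text{---})$ that you invoke does not force these derivative terms to vanish, and there is no ``telescoping'' in general. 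So your reduction of $P(\nabla_i)[\MM]=0$ to $P_*(u_i)=0$ breaks down exactly where the circuit relation has three or more factors.

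The paper's argument circumvents this by a feature specific to the circuit monomials: Lemma~\ref{vanishing} implies (equation~(\ref{quantumrelationslemma})) that every \emph{proper} subproduct $\prod^*_{i\in S,\,i\neq i_0}v_i$ coincides with the \emph{classical} product $\prod_{i\in S,\,i\neq i_0}v_i$, which is a $q$-independent section. Consequently each intermediate application of $\nabla_{v_i}$ has vanishing derivative term, and one does obtain $\big(\prod_{i\in S}\nabla_{v_i}\big)[\MM]=\prod_{i\in S}^*v_i$ for these particular monomials. In other words, the identity you want is not a formal consequence of Dubrovin-connection generalities but of the vanishing lemma established earlier in the paper; your proof needs to invoke that lemma rather than WDVV.
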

The first equation follows from the linear relations in $H_{T^d}^2(\MM)$. The second follows from
\[ \left(\prod_{i \in S} \nabla_{v_i} \right) [\MM] = \prod_{i \in S}^* v_i \] and
\[ \left( \prod_{i \in S} \hbar - \nabla_{v_i} \right) [\MM] = \prod^*_{i \in S} \hbar - v_i, \]
which in turn follow immediately from the absence of quantum corrections up till the last factor (\ref{vanishing}). In fact these generate all the relations, since their symbols generate the quantum relations. This is an example of a GKZ system, as defined in \cite{gelfand1987holonomic}. In the next section we rewrite the above as Picard-Fuchs equations, following \cite{gelfand1990generalized}.

\subsection{Picard-Fuchs equations for $\int_{\gamma} \Omega$}

By partial integration,
\begin{equation}
\label{stokes}  \int \frac{\partial}{\partial t_j} \left( \prod_{i \in \arrangement} (1 + q_i t^{a_i})^{\hbar} \right) \prod_{k = 1}^d t_k^{-\eqpaj} \frac{d t_k}{t_k} + \int \prod_{i \in \arrangement} (1 + q_i t^{a_i})^{\hbar} \frac{\partial}{\partial t_j} \left( \prod_{k = 1}^d t_k^{-\eqpaj-1} \right) d t_k = 0. \end{equation}
Set $\mE_i = q_i\frac{\partial}{\partial q_i}$. Then
\begin{equation}
\mE_i \Omega = \hbar \frac{q_i t^{a_i}}{(1+q_it^{a_i})} \Omega.
\end{equation}
By (\ref{stokes}) we have
\begin{equation} \label{lineargkzrelations} \left(\sum_i a_{ij}{\mE_i} - \eqpaj \right) \int_{\gamma \subset \MRR_q} \Omega = 0. \end{equation}
Now let $S$ be a circuit corresponding to a relation $\sum_{i \in S^+} a_i - \sum_{i \in S^-} a_i = 0$. Then by direct calculation,
\begin{equation}\label{multiplicativegkzrelations} \left( \prod_{i \in S^+} \mE_i \prod_{i \in S^-} (\hbar - \mE_i) - q^{\beta_S} \prod_{i \in S^-} \mE_i \prod_{i \in S^+} (\hbar - \mE_i) \right) \Omega = 0.\end{equation}
Equations (\ref{lineargkzrelations}) and (\ref{multiplicativegkzrelations}) show that $J_{\gamma}(q)$ satisfies the GKZ system under the correspondence $\mE_i \to \nabla_i$. The system is called \emph{non-resonant} \cite{gelfand1990generalized} if $J_{\gamma}(q)$ satisfies no other relations; we prove that our system is non-resonant for generic $(\hbar, c_j)$ in appendix \ref{gkzappendix}. For such a non-resonant system, the integrals $J_{\gamma}(q)$ for $\gamma \in  H_{\bullet}(\MRR_q, \Loc)$ span exactly the solution space, thus concluding the proof of Theorem \ref{thmmirror2}. Theorem \ref{thmmirror} follows by identifying $P(\mE_i,q)\Omega$ and $P(\nabla_i,q)[\MM]$ for all polynomials $P$.

\begin{remark}
The \scare{mirror space} $\MRR_q$ is half the dimension of $\MM$. One can view it as the target of a \scare{multiplicative} moment map \cite{alekseev1998lie, crawley2006multiplicative} arising from a hyperk\"ahler action of $T^d$ on a multiplicative analogue of $\MM$, of the same dimension \cite{yamakawa2007geometry}. The affine subtori which we remove from $(T^d)^{\vee}$ are simply the locus where the moment fibers degenerate.
\end{remark}

\begin{remark}
Writing $\Omega = \text{Exp}(Y_q)\prod_{j} d\text{log}(t_j)$, where the \scare{superpotential} $Y_q$ is a multi-valued function on $\MRR_q$, we can rephrase the above result as a presentation of the equivariant quantum cohomology of $\MM$ as the spectrum of the critical locus of $Y_q$, in the spirit of \cite{givental1997mirror}.
\end{remark}
\appendix
\section{Resonant parameters of a GKZ system}
\label{gkzappendix}

References for this section are \cite{gelfand1990generalized} and \cite{saito1992parameter}. For certain values of the parameters $(\hbar, \eqpaj)$, the space of periods of $\Omega$ does not surject onto the space of solutions. One can guarantee a surjection by choosing a \scare{non-resonant} parameter; we now define these parameters and show they are generic.

Let $\arrangement = \{1,2,...,n \}$ (resp. $\arrangement^* = \{ 1^*, ..., n^* \}$) index the classes $u_i$ (resp. $\hbar - u_i$). Given a split circuit $S = S^+ \cup S^-$ as in the quantum relation (\ref{quantumrelations}), one obtains a pair $S^L, S^R \subset \arrangement \cup \arrangement^*$, $S^L = \{ i \in S^+ \} \cup \{ i^* \in S^-\}, S^R =  \{ i \in S^- \} \cup \{ i^* \in S^+ \}$ corresponding to the factors on the left (resp. right) of the quantum relation.

\begin{definition}
We call a collection $Q \subset \arrangement \cup \arrangement^*$ \emph{saturated} if for every $S$, either $Q \cap S^L = Q \cap S^R = \emptyset$ or \emph{both} intersections are nonempty. We call $Q$ \emph{minimal saturated} if it is non-empty and minimal with respect to this property.\footnote{In the set-up of \cite{gelfand1990generalized}, such $Q$ correspond to toric divisors in the support of the Fourier transform of the GKZ D-module.}
\end{definition}

Given $Q$, let $Q^c = \arrangement \cup \arrangement^* \setminus Q$ and let $Lin(Q^c)$ be the linear span in $\C^n \oplus \C^d$ of $\{ e_i \oplus a_i : i \in Q^c \} \cup \{ e_i \oplus 0 : i^* \in Q^c \}$. Given a parameter $(\hbar, \eqpaj)$, set $v_{\hbar, \alpha} = (\hbar, \hbar, ..., \hbar, \eqpaj) \in \C^{n} \oplus \C^{d}$. This is the usual GKZ parameter for our system; it lies in the subspace $V_n \subset \C^{n} \oplus \C^{d}$ whose first $n$ coordinates are identical.

\begin{definition}
$(\hbar, \eqpaj)$ is \emph{non-resonant} if for each minimal saturated $Q$, we have $v_{\hbar, \alpha} \notin \text{Lin}(Q^c) + \mathbb{Z}^{d+n}$.
\end{definition}

\begin{theorem} \cite{gelfand1990generalized}
For non-resonant parameters, the space of Euler integrals (\ref{Euler integral}) spans the space of solutions to the GKZ system \ref{propQDE}.
\end{theorem}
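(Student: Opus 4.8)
This is the structure theorem for non-resonant GKZ systems specialized to our setting, so the plan is to deduce it from \cite{gelfand1990generalized} once two ingredients are supplied: that the periods $J_\gamma(q)$ of (\ref{eulerform}) are genuinely the $A$-hypergeometric functions of the vector configuration underlying Proposition \ref{propQDE}, and that non-resonance is a generic condition on $(\hbar,\eqpaj)$. The cited theorem then gives the conclusion, so these two points are what I would establish.

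For the first: after Cayley homogenization of the affine forms $1+q_it^{a_i}$, the integral $J_\gamma$ is exactly the Euler integral attached to the configuration $\{e_i\oplus a_i\}_{i\in\arrangement}\cup\{e_i\oplus 0\}_{i^*\in\arrangement^*}$ in $\C^n\oplus\C^d$ with parameter $v_{\hbar,\alpha}=(\hbar,\dots,\hbar,\eqpaj)$. Its toric operators are the linear relations (\ref{lineargkzrelations}), and the box operators coming from integral relations among the columns are generated by (\ref{multiplicativegkzrelations}); by Proposition \ref{propQDE} these coincide, under $\mE_i\leftrightarrow\nabla_i$, with the presentation of $H^\bullet_{\FT}(\MM,\C)$, so the holonomic rank of the D-module equals $\dim_\C H^\bullet_{\FT}(\MM,\C)\otimes\C_{\hbar,\eqpaj}$. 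On the other side, for a sufficiently generic rank-one local system $\Loc$ the only nonvanishing twisted cohomology of the complement $\MRR_q$ sits in degree $d$, with dimension the number of bounded chambers of the real arrangement $\{\mathcal H_i\}$, which matches the above by the classical presentation of $H^\bullet(\MM)$. This rank comparison is the hypothesis the GKZ theorem requires.

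For the second: there are only finitely many minimal saturated collections $Q$, so it suffices to show, for each, that the resonance locus $\{(\hbar,\eqpaj):v_{\hbar,\alpha}\in\mathrm{Lin}(Q^c)+\mathbb Z^{n+d}\}$ is a countable union of \emph{proper} affine subspaces of the $(d+1)$-dimensional family $V_n=\{v_{\hbar,\alpha}\}$; its complement, the non-resonant locus, is then dense and of full measure. Since $\mathrm{Lin}(Q^c)+\mathbb Z^{n+d}$ is a countable union of translates of the fixed subspace $\mathrm{Lin}(Q^c)$, this holds as soon as $V_n\not\subseteq\mathrm{Lin}(Q^c)$. So everything comes down to the combinatorial claim: for every nonempty minimal saturated $Q$, one has $V_n\not\subseteq\mathrm{Lin}(Q^c)$.

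Proving that claim is the main obstacle. A short linear-algebra computation shows that $V_n\subseteq\mathrm{Lin}(Q^c)$ would force $Q$ to contain no pair $\{i,i^*\}$ and, simultaneously, $\{a_i:\text{neither }i\text{ nor }i^*\in Q\}$ to span $\C^d$; the task is to rule this out for minimal saturated $Q$. The balancing condition in the definition of saturated, that $Q$ must meet $S^L$ and $S^R$ together for every circuit $S$ it meets, combined with minimality, forces $Q$ to be small and highly structured, and I expect that simplicity and unimodularity of the arrangement imply a minimal saturated $Q$ either already contains some pair $\{i,i^*\}$ or arises from a single circuit with its remaining $a_i$'s spanning a proper subspace. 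The natural tools are a deletion/contraction induction over circuits, or the matroid description of the faces of the cone over a Cayley configuration. Once this structural fact is in hand the genericity statement is immediate, and with it the theorem follows directly from \cite{gelfand1990generalized}.
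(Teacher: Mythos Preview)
The theorem you are asked to prove is \emph{not} proved in the paper: it is stated with attribution to \cite{gelfand1990generalized} and used as a black box. The paper's contribution in this appendix is (i) the verification, already carried out in (\ref{lineargkzrelations}) and (\ref{multiplicativegkzrelations}) before the theorem is stated, that the periods $J_\gamma$ satisfy the GKZ system, and (ii) the subsequent \emph{Lemma} that non-resonant parameters are generic. Your proposal folds both of these into a ``proof'' of the cited theorem, which misreads the logical structure; and your rank discussion (holonomic rank versus bounded chambers, etc.) is neither in the paper nor needed to invoke the cited result.

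For the genericity lemma, which is the actual content to compare, you reduce correctly to the combinatorial claim $V_n\not\subseteq\mathrm{Lin}(Q^c)$ and your linear-algebra translation (that $Q$ contains no pair $\{i,i^*\}$ and that $\{a_i:i,i^*\notin Q\}$ spans $\C^d$) agrees with the paper's. But you then call this ``the main obstacle'' and gesture at deletion/contraction, matroid faces, and simplicity/unimodularity, without closing the argument. The paper gives a direct four-line contradiction requiring none of that machinery: write $I=\{i:i,i^*\in Q^c\}$; pick $i_0\in Q$ (say $i_0$, with $i_0^*\in Q^c$); since $\{a_i\}_{i\in I}$ spans $\lt^d$, there is a circuit $S$ containing $i_0$ whose other elements lie in $I$; saturation of $Q$ forces $Q$ to meet both $S^L$ and $S^R$, but the side not containing $i_0$ consists only of elements indexed by $I$ and $i_0^*$, none of which lie in $Q$. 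In particular, smoothness of the arrangement plays no role here, contrary to what your sketch suggests.
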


We now show that non-resonant parameters are generic. We will show that for each minimal saturated $Q$, $\text{Lin}(Q^c)$ intersects $V_n$ in a strict subspace.

Suppose this fails for some $Q$. $Q^c$ must contain a collection of pairs $\lbrace i, i^* \rbrace_{i \in I \subset \arrangement}$ such that $\{ a_i \}_{i \in I}$ span $\lt^d$. $Q^c$ also clearly contains either $i$ or $i^*$ for all $i \in \arrangement$. However, since $Q$ is non-empty, for some $i_0$ we have either $i_0 \in Q, i_0^* \in Q^c$ or $i_0^* \in Q, i_0 \in Q^c$; suppose the former case holds. Since the $\{ a_i \}_{i \in I}$ are a spanning set, a non-empty subset of them appear alongside $i_0$ as the indices of some circuit $S$. Since $Q$ is saturated, $Q$ must also contain some $i$ or $i^* : i \in I$. This is a contradiction. The same reasoning holds for the latter case. We have proved

\begin{lemma}
There is a generic set of non-resonant parameters $(\hbar, \eqpaj)$ for the GKZ system \ref{propQDE}.
\end{lemma}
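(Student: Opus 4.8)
The plan is to show that for each minimal saturated $Q$, the subspace $\text{Lin}(Q^c)$ meets $V_n$ in a proper subspace of $V_n$; once this is established, $\text{Lin}(Q^c) + \mathbb{Z}^{d+n}$ is a countable union of translates of a proper subspace of $V_n$, and since there are only finitely many minimal saturated $Q$, the complement of all these bad loci inside $V_n$ is a dense (indeed co-meager, and in fact full-measure) set. Thus a generic $(\hbar, \eqpaj)$ — equivalently a generic $v_{\hbar,\alpha} \in V_n$ — avoids all of them and is non-resonant in the sense defined above.

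First I would suppose for contradiction that $\text{Lin}(Q^c) \supseteq V_n$ for some minimal saturated $Q$. Containing $V_n$ forces, in particular, that the projection of $\text{Lin}(Q^c)$ to the last $d$ coordinates $\C^d$ is all of $\C^d = (\lt^d)_{\C}$; since the generators of $\text{Lin}(Q^c)$ with nonzero $\C^d$-component are exactly the $e_i \oplus a_i$ for $i \in Q^c \cap \arrangement$, this means the set $I := \{ i \in \arrangement : i \in Q^c \text{ and } i^* \in Q^c\}$ — or at any rate the larger set $\{i : i \in Q^c \cap \arrangement\}$ — has $\{a_i\}$ spanning $\lt^d$. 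A short check with the first $n$ coordinates (the $e_i$ are linearly independent, and the subspace $V_n$ requires all $n$ of them to appear with equal coefficient) forces $Q^c$ to contain, for every index $i \in \arrangement$, at least one of $i$ or $i^*$; and the spanning condition can be arranged to come from pairs, i.e. $\{a_i\}_{i \in I}$ spans $\lt^d$ with $\{i, i^*\} \subset Q^c$ for $i \in I$.

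Next, since $Q$ is nonempty and minimal, pick $i_0$ with (say) $i_0 \in Q$ and $i_0^* \in Q^c$; the other case $i_0^* \in Q,\ i_0 \in Q^c$ is symmetric. Because $\{a_i\}_{i \in I}$ spans $\lt^d$, the vector $a_{i_0}$ is a linear combination of the $a_i,\ i \in I$, so there is a circuit $S$ whose underlying relation in $\lt^d$ involves $i_0$ together with a nonempty subset of $I$. Now apply the saturation property of $Q$ to this $S$: the side $S^L$ or $S^R$ containing $i_0$ (as $i_0 \in S^+$ or $i_0 \in S^-$) meets $Q$, so the other side must meet $Q$ as well. But every index of $S$ other than $i_0$ lies in $I$, hence in $Q^c$, and for $i \in I$ both $i$ and $i^*$ lie in $Q^c$ — so $S^L \cup S^R$ contains no element of $Q$ besides possibly $i_0$ itself, and $i_0$ lies on only one side. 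This contradicts saturation, completing the argument that $\text{Lin}(Q^c) \cap V_n$ is a proper subspace.

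The main obstacle, and the step deserving the most care, is the bookkeeping in the second paragraph: translating "$\text{Lin}(Q^c) \supseteq V_n$" into the combinatorial statement that $Q^c$ contains a spanning set of pairs \emph{and} hits every index class, while keeping straight the interaction between the $\C^n$-part (where the constraint $V_n$ lives) and the $\C^d$-part (where the matroid of the $a_i$ lives). One must be careful that it is genuinely pairs $\{i,i^*\}$ that span — a priori $Q^c$ could contain $i$ but not $i^*$ for some spanning indices — but this is harmless: the argument only needs that the indices of the circuit $S$ produced from $a_{i_0}$ all lie in $Q^c$ with their starred partners also in $Q^c$, and one can choose the spanning subset $I$ to consist of indices whose pairs survive, shrinking $Q^c$-membership claims to what the $V_n$-containment actually supplies. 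Once the reduction to a proper-subspace statement is in hand, the genericity conclusion is immediate from finiteness of the family of minimal saturated $Q$.
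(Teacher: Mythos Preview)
Your proposal is correct and follows essentially the same approach as the paper's proof: both argue by contradiction that $\text{Lin}(Q^c) \supseteq V_n$ forces $Q^c$ to contain a spanning set of pairs $\{i,i^*\}_{i\in I}$ together with at least one of $i,i^*$ for every $i\in\arrangement$, then produce a circuit through an element $i_0\in Q$ using the spanning set $I$ to violate saturation. Your write-up is somewhat more explicit about the linear-algebra bookkeeping behind the first step and about why the circuit forces a contradiction on both sides $S^L,S^R$, but the structure of the argument is identical.
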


Theorem \ref{thmmirror2} follows immediately.

\bibliographystyle{amsplain}
\bibliography{HTQH_bib}

\end{document}